\newtheorem{theorem}{Theorem}[section]
\newtheorem{lemma}[theorem]{Lemma}
\newtheorem{proposition}[theorem]{Proposition}
\newtheorem{claim}[theorem]{Claim}
\newtheorem{criterion}[theorem]{Criterion}
\theoremstyle{definition}
\theoremstyle{remark}
\newtheorem{remark}{Remark}[section]
\newcommand{\alt}{\mathop{\mathrm{alt}}\nolimits}
\newcommand{\fix}{\mathop{\mathrm{Fix}}\nolimits}
\newcommand{\id}{\mathop{\mathrm{id}}\nolimits}
\title[Seifert surgery and Rasmussen invariant]
{Seifert fibered surgery and Rasmussen invariant}
\dedicatory{In honour of J.~Hyam Rubinstein}
\date{\today}
\author[K.~Ichihara]{Kazuhiro Ichihara}
\address{Department of Mathematics, College of Humanities and Sciences, Nihon University,
3-25-40 Sakurajosui, Setagaya-ku, Tokyo 156-8550, Japan}
\email{ichihara@math.chs.nihon-u.ac.jp}
\thanks{The first author is partially supported by
Grant-in-Aid for Young Scientists (B), No.~20740039,
Ministry of Education, Culture, Sports, Science and Technology, Japan.}
\author[I.~D.~Jong]{In Dae Jong}
\address{Faculty of Liberal Arts and Sciences, 
Osaka Prefecture University, 1-1 Gakuen-cho, Nakaku, Sakai, Osaka 599-8531, Japan}
\email{jong@las.osakafu-u.ac.jp}
\thanks{The second author is partially supported by 
Grant-in-Aid for Research Activity Start-up, No.~22840037, 
Japan Society for the Promotion of Science.}
\keywords{Alternation number; Montesinos trick; Pretzel knot; Rasmussen invariant; Seifert fibered surgery; Signature.}
\subjclass[2000]{Primary 57M50; Secondary 57M25}
\begin{document}

\begin{abstract}
We give a new criterion for a given knot to be a Montesinos knot 
by using the Rasmussen invariant and the signature. 
We apply the criterion to study Seifert fibered surgery on a strongly invertible knot, 
and show that a $(p,q,q)$-pretzel knot with integers $p,q \ge 2$ admits no Seifert fibered surgery. 
\end{abstract}

\maketitle

\section{Introduction}
The well-known Hyperbolic Dehn Surgery Theorem 
\cite[Theorem 5.8.2]{Thurston1978} 
says that all but finitely many Dehn surgeries on a hyperbolic knot 
yield hyperbolic manifolds. 
Here a knot is called \textit{hyperbolic} 
if its complement admits a complete hyperbolic structure of finite volume. 
Thereby a Dehn surgery on a hyperbolic knot yielding a non-hyperbolic manifold is called {\it exceptional}. 
In view of this, it is an interesting and challenging problem to determine and classify 
all exceptional surgeries on hyperbolic knots in the $3$-sphere $S^3$. 

Note that 
exceptional surgeries can be classified into the following three types; 
a Seifert fibered surgery, a toroidal surgery, a reducible surgery. 
Here a Dehn surgery is called {\it Seifert fibered} / {\it toroidal} / {\it reducible} 
if it yields a Seifert fibered / toroidal / reducible manifold respectively. 
This classification has been achieved as 
a consequence of the affirmative solution 
to the Geometrization Conjecture. 
Here we note that currently less is known about 
a Seifert fibered surgery compared with reducible or toroidal surgeries. 
See \cite{Boyer2002} for a survey. 

In this paper, we give a sufficient condition for a strongly invertible knot in $S^3$ 
to admit no Seifert fibered surgery (Proposition~\ref{cri:SFSquotient}). 
To use this sufficient condition, we need to detect whether a given knot is a Montesinos knot or not. 
For this problem, we give a new criterion for 
a knot to be a Montesinos knot (Criterion~\ref{cri:Monte}), 
which is based on estimations of two concordance invariants, 
that is, the signature and the Rasmussen invariant. 
This criterion is obtained from the fact that any Montesinos knot is close to some alternating knot in the sense of the Gordian distance. 
This would be interesting independently.

As an application, we study Seifert fibered surgeries on pretzel knots. 
A \textit{pretzel knot} of type $(a_1, a_2, \dots, a_l)$, 
denoted by $P(a_1, a_2, \dots, a_l)$, is defined as 
a knot admitting a diagram obtained by putting rational tangles 
of the form $1/a_1, 1/a_2, \dots, 1/a_l$ together in a circle. 
For basic terminologies in knot theory, 
we refer the reader to \cite{KawauchiBook}, \cite{RolfsenBook}. 

Among all knots in $S^3$, 
exceptional surgeries on pretzel knots have been studied extensively. 
(We summarize such results later.) 
One of the motivations to study Dehn surgeries on pretzel knots 
is that many interesting examples about exceptional surgeries 
have been found among pretzel knots. 
For example, the first known examples of 
Dehn surgeries on hyperbolic knots yielding lens spaces 
are those on $P(-2,3,7)$~\cite{FintushelStern1980}, and 
the first example of a Seifert fibered surgery 
on a non-strongly invertible knot is that on $P(-3,3,5)$~\cite{MattmanMiyazakiMotegi2006}. 

On the other hand, we have the following. 

\begin{theorem}\label{thm:inv}
A pretzel knot $P(p,q,q)$ with integers $p , q \ge 2$ 
admits no Seifert fibered surgery. 
\end{theorem}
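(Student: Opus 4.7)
My plan is to apply Proposition~\ref{cri:SFSquotient} to $P(p,q,q)$ and then derive a contradiction by means of Criterion~\ref{cri:Monte}, after an explicit Montesinos-trick computation.

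First, $P(p,q,q)$ is strongly invertible via the natural involution $\iota$ that exchanges the two $q$-tangles and preserves the $p$-tangle setwise. Supposing toward contradiction that some slope $r$ yields a Seifert fibered $3$-manifold, Proposition~\ref{cri:SFSquotient} forces the associated quotient knot $\widehat{K}_r$ produced by the Montesinos trick — namely, the image of the fixed axis in $S^3/\iota \cong S^3$, modified by the rational tangle encoding $r$ — to be a Montesinos knot.

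Second, I would identify $\widehat{K}_r$ explicitly. Folding the standard pretzel diagram of $P(p,q,q)$ along the axis of $\iota$ presents the quotient as a concrete two-string tangle built from the $p$-twist region together with a single $q$-twist region; inserting the rational $r$-tangle produces $\widehat{K}_r$ as a specific family of knots parametrized by $p$, $q$, and $r$. Using the positivity of this diagram I would then compute, or at least bound from below, the signature $\sigma(\widehat{K}_r)$ and the Rasmussen invariant $s(\widehat{K}_r)$, showing that both grow with $p$ and $q$ in a way that cannot be matched by any Montesinos knot. Criterion~\ref{cri:Monte} then rules out $\widehat{K}_r$ being Montesinos, contradicting the previous paragraph.

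The main obstacle I anticipate is the uniform control of the invariants of $\widehat{K}_r$ across all candidate Seifert fibered slopes $r$, since a priori infinitely many quotients must be ruled out simultaneously. I expect to handle this either by reducing to finitely many slopes using standard a priori restrictions on exceptional surgery slopes for hyperbolic pretzel knots (treating torus-knot and satellite sub-cases of the family separately by the classification of Seifert fibered surgeries on such knots), or by tracking the $r$-dependence of $\widehat{K}_r$ carefully enough that the inequality supplied by Criterion~\ref{cri:Monte} is violated in every remaining case.
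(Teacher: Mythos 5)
Your overall strategy (strong inversion, Montesinos trick, then Criterion~\ref{cri:Monte}) is indeed the paper's, but as written the argument has two genuine gaps. First, Proposition~\ref{cri:SFSquotient} does not force the quotient link $L_r$ to be a Montesinos knot: its contrapositive only says that $L_r$ must be a Montesinos link \emph{or} a Seifert link. Criterion~\ref{cri:Monte} cannot dispose of the Seifert-link alternative, since a positive torus knot such as $T_{4,x}$ has $s+\sigma$ arbitrarily large, so the inequality $|s+\sigma|\ge 4$ does not detect it. The paper therefore has to show separately that $L_r$ is not a torus knot; it does this by exhibiting the quotient as the closure of a positive $4$-braid containing a full twist (so Franks--Williams gives braid index $4$), then comparing $\det(L_r)=|H_1(K(r))|=|r|$ with $\det(T_{4,x})=x$ (Lemma~\ref{lem:detT4}), and finally comparing genera. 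Moreover, when $r$ is even the quotient $L_r$ is a $2$-component link, to which Criterion~\ref{cri:Monte} (a statement about knots) does not apply at all; that case is handled in the paper by bridge-index estimates and the classification of Seifert links (Lemma~\ref{lem:SeifertLink}). Your proposal addresses none of these alternatives.

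Second, the slope restriction is not ``standard'' uniformly across the family, and the paper in fact splits into two cases by the parity of $p$. For $p,q$ odd it uses that $P(p,q,q)$ is alternating (hence only integral exceptional slopes) and has genus one (hence $0$-surgery is toroidal and $|r|\le 8$); this bound is also what guarantees that the quotient diagram is positive. For $p$ even the genus-one argument fails and there is no uniformly bounded list of slopes: the paper instead exploits a cyclic period $2$ whose factor knot is $T_{2,q}$, and Lemma~\ref{lem:period} pins the slope down to $r=4q\pm 1$, which grows with $q$. Your fallback of ``tracking the $r$-dependence'' might salvage the Montesinos exclusion, but the torus-knot exclusion genuinely needs the exact value of $r$ through the determinant computation. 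Finally, note that Proposition~\ref{cri:SFSquotient} only excludes Seifert fibrations with base orbifold $S^2$; to reduce to that case (base $S^2$ with exactly three exceptional fibers) you also need the prior classification results quoted at the start of the paper's proof.
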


Note that the integer $q$ must be odd otherwise $P(p,q,q)$ is not a knot but a link. 

\begin{remark}\label{rem:hyp-pretzel}
A pretzel knot $P(p,q,q)$ with integers $p , q \ge 2$ 
is known to be hyperbolic. 
Actually it is already known which pretzel knots are non-hyperbolic. 
Let $T_{a,b}$ denote the torus knot of type $(a,b)$, that is, the knot isotopic to the $(a,b)$-curve on the standardly embedded torus in $S^3$. 
If the length $l$ of a pretzel knot is less than three, 
then it actually is a trivial knot or $T_{2,x}$, which is non-hyperbolic. 
Otherwise, non-trivial non-hyperbolic pretzel knots are 
just $P(-2,3,3)$ and $P(-2,3,5)$, which are actually $T_{3,4}$ and $T_{3,5}$ respectively. 
This fact was shown by Kawauchi~\cite{Kawauchi1985}. 
Oertel~\cite[Corollary 5]{Oertel1984} independently showed the fact together with the result in the unpublished monograph by Bonahon and Siebenmann~\cite{BonahonSiebenmann1979-85, BonahonSiebenmann2010}. 
\end{remark}

In the following, as a background, we collect known facts on exceptional surgeries on pretzel knots. 
Most of the following results concern a large class of knots, called Montesinos knots. 
However, for simplicity, we only deal with pretzel knots. 
See the original references for the precise statements. 

If the length $l$ of a pretzel knot is less than three, 
then it is non-hyperbolic as noted in Remark~\ref{rem:hyp-pretzel}. 
On the other hand, it was shown by Wu~\cite{Wu1996} that 
a pretzel knot $P(a_1,\dots, a_l)$ with $|a_i| \ge 2$ and $l \ge 4$ admits no exceptional surgery. 
Also Wu showed that a hyperbolic pretzel knot admits no reducible surgery~\cite{Wu1996}, 
and gave a complete classification of toroidal surgeries on pretzel knots~\cite{Wu2006}. 
The authors also showed that there is no toroidal Seifert fibered surgery on pretzel knots other than the trefoil knot~\cite{IchiharaJong2010a}. 
Among atoroidal Seifert fibered surgeries on pretzel knots, 
Dehn surgeries yielding $3$-manifolds with cyclic or finite fundamental groups 
were completely classified by the authors~\cite{IchiharaJong2009} and 
Futer, Ishikawa, Kabaya, Mattman, and Shimokawa~\cite{FuterIshikawaKabayaMattmanShimokawa2009} independently. 

Recently, Wu~\cite{Wu2009, Wu2010} studied atoroidal Seifert fibered surgeries in detail. 
In particular, he showed that if a hyperbolic pretzel knot that is not equivalent to a two-bridge knot admits an atoroidal Seifert fibered surgery, 
then it is equivalent to $P(q_1,q_2,q_3,n)$ with $n=0,-1$ and, 
up to relabeling, $(|q_1|,|q_2|,|q_3|) = (2,|q_2|,|q_3|), (3,3,|q_3|), \mbox{ or } (3,4,5)$~\cite[Theorem 7.2]{Wu2010}. 
Our theorem given in this paper shows that, among the families left open by Wu, 
$P(2,q,q)$ and $P(3,3,n)$ with $q \ge 3, n \ge 3$ have no Seifert fibered surgeries. 
This is one of the motivations for focusing on the family $P(p,q,q)$ with $p,q \ge 2$. 
Furthermore the authors and Kabaya~\cite{IchiharaJongKabaya2011} gave 
a complete classification of exceptional surgeries on $(-2,q,q)$-pretzel knots with $q \ge 3$, 
in particular, it was shown that a $(-2,p,p)$-pretzel knot with $q \ge 3$ admits no Seifert fibered surgery. 

Consequently we see that 
if a hyperbolic pretzel knot admits an atoroidal Seifert fibered surgery, then it is equivalent to 
\begin{itemize}
\item[(i)] $P( \pm 2 , a , b )$ with $a \ne b$, 
\item[(ii)] $P( 3 , 3 , c)$ with $c \le -3$, 
\item[(iii)] $P( 3 , 3 , d, -1)$ with $d \ge 3 $, 
\item[(iv)] $P( 3 , - 3 , \pm e)$ with $e \ge 4$, or 
\item[(v)] $P( 3 , \pm 4 , \pm 5)$, $P( 3 , \pm 4 , \mp 5)$,  $P( 3 , 4 , 5, -1)$. 
\end{itemize}

We here include some comments for the next step of our studies. 
Our methods presented in this paper may be applied to knots different from $P(p,q,q)$. 
One of the candidates would be a kind of double-torus knots. 
A key point to apply our techniques is that 
the knot can admit a particular kind of symmetry of period 2. 
(A typical example of such a symmetry is depicted in Figure~\ref{fig:axis1} for $P(p,q,q)$.) 
Unfortunately it seems that the pretzel knots listed above do not have such a symmetry, 
and so, there is little chance that our techniques would work for the knots.

\section{Criteria}\label{sec:cri}

In this section, 
we introduce a sufficient condition for a strongly invertible knot to admit no Seifert fibered surgery. 
We also give a criterion for a knot to be a Montesinos knot. 

First we set up definitions and notations. 
Let $K$ be a knot in $S^3$, and let $r$ be a rational number. 
Suppose that $r$ corresponds to a slope $\gamma$ 
(i.e., an isotopy class of non-trivial simple closed curves) 
by the well-known correspondence between 
$\mathbb{Q} \cup \{ 1/0 \}$ and 
the slopes on the boundary torus $\partial E(K)$, 
which is given by using the standard meridian-longitude system for $K$.
Then, the \textit{Dehn surgery on $K$ along $r$} 
is defined as the following operation. 
Take the exterior $E(K)$ of the knot $K$ (i.e., remove an open tubular neighborhood of $K$), 
and glue a solid torus $V$ to $E(K)$ 
so that a simple closed curve representing $\gamma$ bounds a meridian disk in $V$. 
We call such a Dehn surgery 
the \textit{$r$-surgery} on $K$ for brevity, 
and denote the obtained manifold by $K(r)$.

\subsection{Criterion for a strongly invertible knot to admit no Seifert fibered surgery}\label{subsec:cri-SeifSurg}

Let $K$ be a strongly invertible knot, and let $r$ be a rational number. 
Here a knot $K$ is said to be {\it strongly invertible} if there is an orientation preserving involution of $S^3$ which induces an involution of $K$ with two fixed points. 
Applying the Montesinos trick~\cite{Montesinos1975}, we obtain a link $L_r$ in $S^3$ such that the double branched covering space of $S^3$ branched along $L_r$, denoted by $M_2(L_r)$, is homeomorphic to $K(r)$. 
Actually, $L_r$ is either a knot or a two-component link. 
Suppose that $K(r) \cong M_2(L_r)$ is a Seifert fibered manifold with the base orbifold $S^2$. 
If $K$ is non-trivial and not equivalent to the trefoil knot, then one can take the covering involution $\iota : M_2(L_r) \rightarrow M_2(L_r)$ with $M_2(L_r) / \iota \cong S^3$ as fiber preserving (see \cite[Lemma 3.1]{Motegi2003}). 
Let $\pi : K(r) \rightarrow S^2$ be a Seifert fibration, $\iota$ the involution preserving the Seifert fibration, and $\hat{\iota}: S^2 \rightarrow S^2$ the homeomorphism induced from $\iota$ satisfying $\pi \circ \iota = \hat{\iota} \circ \pi$. 
Let $\fix(\iota)$ denote the set of fixed points of $\iota$. 
If each component of $\fix(\iota)$ is a fiber in $K(r)$, then $K(r) \setminus \fix(\iota)$ admits a Seifert fibration. 
Since $\iota$ preserves the Seifert fibration of $K(r)$, the exterior of $L_r$ also admits a Seifert fibration, namely, $L_r$ is a {\it Seifert link}. 
If a component of $\fix(\iota)$ is not a fiber of $K(r)$, then $\hat{\iota}$ is orientation reversing (see \cite[Lemma 3.2 (1)]{Motegi2003}) and $\iota$ reverses an orientation of each fiber in $K(r)$.  
In this case, every cone point is lying on the circle $C_{\iota}=\pi(\fix(\iota))$~\cite[Lemma 7.2]{Motegi2003} and $\hat{\iota}$ is a reflection on $S^2$ along $C_{\iota}$. 
Thus, $L_r$ is equivalent to a Montesinos link. 
For more details, we refer the reader to \cite{MiyazakiMotegi2002a} and \cite{Motegi2003}. 
Consequently, we obtain the following.

\begin{proposition}\label{cri:SFSquotient}
Let $K$ be a strongly invertible hyperbolic knot and let $r$ be a rational number. 
Let $L_r$ be a link obtained by applying the Montesinos trick to $K(r)$. 
If $L_r$ is equivalent to neither a Seifert link nor a Montesinos link, 
then $K(r)$ is not a Seifert fibered manifold with the base orbifold $S^2$. 
\end{proposition}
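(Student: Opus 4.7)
The plan is to prove the contrapositive: assume $K(r)$ is a Seifert fibered manifold with base orbifold $S^2$, and show that $L_r$ must be either a Seifert link or a Montesinos link. The strategy is essentially to organize, as a formal proof, the structural discussion appearing just before the statement: upgrade the covering involution on $M_2(L_r)\cong K(r)$ to a fiber-preserving one, then analyze how this involution sits with respect to the Seifert fibration according to whether its fixed set consists of regular fibers or not.

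First I would invoke the fact that $K$ is hyperbolic to rule out the trivial knot and the trefoil knot, so that Motegi's lemma (\cite[Lemma 3.1]{Motegi2003}) applies and the covering involution $\iota:M_2(L_r)\to M_2(L_r)$ with quotient $S^3$ can be isotoped to be fiber-preserving with respect to the given Seifert fibration $\pi:K(r)\to S^2$. Setting $\hat{\iota}:S^2\to S^2$ to be the induced involution on the base, the analysis splits into two cases depending on the nature of $\fix(\iota)$.

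In the first case, every component of $\fix(\iota)$ is a regular fiber of $\pi$. Then the Seifert fibration restricts to a Seifert fibration of $K(r)\setminus \fix(\iota)$, and since $\iota$ permutes fibers, the quotient of $K(r)\setminus\fix(\iota)$ by $\iota$ is itself Seifert fibered. This quotient is precisely the exterior $E(L_r)$ in $S^3$, so $L_r$ is a Seifert link by definition.

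In the second case, some component of $\fix(\iota)$ is transverse to the Seifert fibration. By \cite[Lemma~3.2 (1)]{Motegi2003}, $\hat{\iota}$ is orientation-reversing on $S^2$ and $\iota$ reverses the orientation on each fiber it meets; hence $\hat{\iota}$ is a reflection, and its fixed circle $C_\iota=\pi(\fix(\iota))$ contains every cone point of the base orbifold by \cite[Lemma~7.2]{Motegi2003}. Cutting along $C_\iota$ exhibits the base orbifold as the double of a disk whose orbifold singularities (on the boundary in $S^2/\hat{\iota}$) encode rational tangles, so that the quotient branched covering datum for $L_r$ is exactly that of a Montesinos link. I would conclude by citing \cite{MiyazakiMotegi2002a,Motegi2003} for the identification of such quotients with Montesinos links, finishing the contrapositive.

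The proof is essentially an assembly of Motegi's lemmas, so the main obstacle is not a deep computation but rather making sure the two cases above are genuinely exhaustive and that the second case truly recovers the Montesinos link structure from the reflection datum (as opposed to only producing, say, a link with a strong inversion). I would address this by being explicit that the cone points lying on $C_\iota$, together with the reflection symmetry, determine a sequence of rational tangles arranged around a disk, which is the defining data of a Montesinos link.
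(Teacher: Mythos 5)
Your proof is correct and follows essentially the same route as the paper: the proposition there is stated without a separate proof environment precisely because it is the contrapositive of the case analysis you reproduce (fiber-preserving covering involution via Motegi's Lemma 3.1, fixed set consisting of fibers giving a Seifert link, fixed set not consisting of fibers giving a reflection with all cone points on $C_{\iota}$ and hence a Montesinos link). The only cosmetic correction is that your dichotomy should read ``each component of $\fix(\iota)$ is a fiber'' versus ``some component is not a fiber,'' since a component that is an exceptional fiber is neither a regular fiber nor transverse to the fibration, though it is handled exactly as in your first case.
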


To use Proposition~\ref{cri:SFSquotient} we need to show that the link $L_r$ is neither a Seifert link nor a Montesinos link. 
We consider this problem in the following.

\subsection{Seifert link}\label{subsec:S-link}

Seifert links are well-understood and completely classified. 
Here we review a classification of Seifert links consisting of at most two components. 

\begin{lemma}[\cite{BurdeMurasugi1970}, see also {\cite[Proposition 7.3]{EisenbudNeumann1985}}]\label{lem:SeifertLink}
Let $L$ be a Seifert link in $S^3$ which consists of at most two components. 
Then $L$ is equivalent to one of the following: 
\begin{enumerate}
\item[(S1)] A torus knot. 
\item[(S2)] A two-component torus link. 
\item[(S3)] A two-component link which consists of a torus knot and a core curve of the torus.
\end{enumerate}
\end{lemma}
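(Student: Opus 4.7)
The plan is to analyze a Seifert fibration $\pi:E(L)\to B$ of the exterior of $L$, enumerate the possibilities for the base $2$-orbifold $B$, and then use the embedding $E(L)\subset S^{3}$ to list the resulting links. Since $L$ has at most two components, the underlying surface of $B$ is a compact orientable surface with one or two boundary circles, and each boundary torus of $E(L)$ is saturated and carries a well-defined fiber slope.

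First I would dispose of the knot case. If $E(K)$ is Seifert fibered with fiber slope $\phi$ on $\partial E(K)$, the solid torus $V$ that recovers $S^{3}$ can always be given a Seifert fibration compatible with that of $E(K)$: its core becomes a regular fiber when $\phi$ meets the meridian $\mu$ once, and an exceptional fiber otherwise. This produces a Seifert fibration of the whole $S^{3}$, whose base, by the classical classification of Seifert fibrations of $S^{3}$, is $S^{2}(p,q)$ for some coprime $p,q\ge 1$. The knot $K$ appears as a fiber of this fibration, hence is a $(p,q)$-torus knot, giving case (S1).

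In the two-component case, I would fill one boundary component of $E(L)$ along its fiber slope to produce a Seifert fibered knot exterior $W\subset S^{3}$, and then apply the knot case to $W$. This embeds both components of $L$ inside a single Seifert fibration of $S^{3}$ over some $S^{2}(p,q)$: the first component is the core of the filling solid torus, and the second is a second fiber of the same fibration. The only sublinks of fibers with at most two components are then a single torus knot (which is case (S1)), two regular fibers forming a torus link (case (S2)), or a regular fiber together with an exceptional fiber that is a core of the Heegaard torus (case (S3)); two exceptional fibers form a Hopf link, which is itself a torus link and so falls under (S2).

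The main obstacle is verifying that the Seifert fibration of $E(L)$ really does extend to a global Seifert fibration of $S^{3}$, and in particular ruling out base orbifolds $B$ of positive genus or with extra boundary components. This can be arranged using $\pi_{1}(S^{3})=1$ together with the sphere theorem to force $B$ to be planar with at most two cone points, after which the extension is straightforward and the case analysis above becomes exhaustive.
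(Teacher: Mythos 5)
The paper does not actually prove this lemma: it is quoted from Burde--Murasugi (see also Eisenbud--Neumann), so there is no in-text argument to compare against. Your outline follows the classical strategy behind those references --- extend a Seifert fibration of the exterior to a Seifert fibration of all of $S^3$ and read the link off as a union of fibers --- and that is the right skeleton. However, two of your steps fail as written, and both failures occur at the same pressure point, namely the possibility that a meridian coincides with the fiber slope.

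First, in the knot case you assert that the fibration of $E(K)$ ``can always'' be extended over the filling solid torus $V$, the core becoming a regular or exceptional fiber according to the intersection number of $\phi$ with $\mu$. This is false when $\phi=\mu$: a Seifert fibration never extends over a solid torus glued along the fiber slope, since the regular fiber cannot bound a meridian disk. You must exclude this case separately, for instance by observing that killing the central fiber class in $\pi_1(E(K))$ yields the orbifold fundamental group of the base, which can be trivial (as $\pi_1(S^3)$ demands) only if the base is a disk with at most one cone point, i.e.\ only if $E(K)$ is a solid torus and $K$ is unknotted. Second, in the two-component case you propose to ``fill one boundary component of $E(L)$ along its fiber slope to produce a Seifert fibered knot exterior $W\subset S^3$.'' This has the slope backwards on both counts: filling along the fiber slope destroys the Seifert fibration rather than extending it, and the resulting manifold is not naturally a subspace of $S^3$. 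The correct move is to fill along the meridian of one component, which produces the exterior of the other component sitting inside $S^3$; one must then verify that this meridian is not the fiber slope so that the fibration extends and the filled-in component becomes a fiber. That verification, together with pinning down the base orbifold of $E(L)$ (which requires the homological constraints $H_1(E(L))\cong\mathbb{Z}$ or $\mathbb{Z}^2$ coming from Alexander duality, not merely $\pi_1(S^3)=1$ and the sphere theorem), is exactly the substantive content of the Burde--Murasugi argument, and it is the part your sketch leaves unaddressed. The final enumeration of at-most-two-fiber sublinks of a Seifert fibration of $S^3$, including the observation that two exceptional fibers form a Hopf link and hence fall under (S2), is correct.
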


Here we give a lemma to detect whether a given knot is a torus knot, which will be used in Section~\ref{sec:proof}. 
For a knot $K$, we denote by $\det(K)$ the {\it determinant} of $K$ and by $\Delta_K(t)$ the Alexander polynomial of $K$. 
Note that $\det(K) = |\Delta_K(-1)|$. 
Then we have the following. 

\begin{lemma}\label{lem:detT4}
Let $x$ be a positive odd integer.
Then we have $\det(T_{4,x}) = x$.
\end{lemma}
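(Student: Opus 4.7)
The plan is to compute $\Delta_{T_{4,x}}(-1)$ directly from the standard formula for the Alexander polynomial of a torus knot and then take absolute value. Recall that for coprime positive integers $p,q$ one has
\[
\Delta_{T_{p,q}}(t) \;=\; \frac{(t^{pq}-1)(t-1)}{(t^p-1)(t^q-1)}.
\]
Since $\gcd(4,x)=1$ (because $x$ is odd), this applies to $T_{4,x}$. The only subtlety is that $t=-1$ is a common root of $t^{4x}-1$ and $t^4-1$, so a direct substitution gives the indeterminate form $0/0$ and the expression must first be simplified.

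To handle this I would split the formula as
\[
\Delta_{T_{4,x}}(t) \;=\; \frac{t^{4x}-1}{t^{4}-1}\cdot\frac{t-1}{t^{x}-1}
\;=\;\Bigl(\sum_{k=0}^{x-1} t^{4k}\Bigr)\Bigl(\sum_{j=0}^{x-1} t^{j}\Bigr)^{-1},
\]
where the second factor is interpreted as a rational function rather than a polynomial before substitution, or, equivalently, I would use the alternative factorisation $\displaystyle\frac{t-1}{t^x-1}=\bigl(1+t+\cdots+t^{x-1}\bigr)^{-1}$.

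Evaluating at $t=-1$: every term of $\sum_{k=0}^{x-1}t^{4k}$ becomes $1$, contributing $x$. For $\sum_{j=0}^{x-1}t^{j}$ at $t=-1$, since $x$ is odd the sum has $(x+1)/2$ terms equal to $+1$ and $(x-1)/2$ terms equal to $-1$, giving $1$. Thus $\Delta_{T_{4,x}}(-1) = x \cdot 1 = x$, and therefore $\det(T_{4,x}) = |\Delta_{T_{4,x}}(-1)| = x$, as $x>0$.

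The main (very mild) obstacle is just the indeterminate form; once the cancellation is performed correctly the evaluation is immediate, and no deeper input than the Alexander polynomial formula for torus knots is required.
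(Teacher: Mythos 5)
Your proof is correct, but it takes a genuinely different route from the paper's. The paper evaluates $\Delta_{T_{4,x}}(t)=f_{4,x}(t)/g_{4,x}(t)$ with $f_{a,b}(t)=(t^{ab}-1)(t-1)$ and $g_{a,b}(t)=(t^a-1)(t^b-1)$, observes that both vanish at $t=-1$, and resolves the $0/0$ form with L'H\^opital's rule, computing $f_{4,x}'(-1)=8x$ and $g_{4,x}'(-1)=8$ to get $8x/8=x$. You instead remove the indeterminacy algebraically by regrouping the factors: $\frac{t^{4x}-1}{t^4-1}=\sum_{k=0}^{x-1}t^{4k}$ evaluates to $x$ at $t=-1$, while $\sum_{j=0}^{x-1}t^{j}$ evaluates to $1$ there because $x$ is odd, so the quotient is $x$ with no calculus needed. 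Your version is slightly more elementary and makes the role of the parity of $x$ visible exactly where it matters (the denominator sum would vanish if $x$ were even, which is consistent with $T_{4,x}$ being a link rather than a knot in that case); the paper's version is a routine mechanical computation that generalizes without having to choose a clever regrouping. Both are complete and of comparable length.
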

\begin{proof}
Set $f_{a,b}(t) = (t^{ab}-1)(t-1)$ and $g_{a,b}(t) = (t^a-1)(t^b-1)$. 
The Alexander polynomial of a torus knot $T_{a,b}$ is given by $\Delta_{T_{a,b}}(t)=f_{a,b}(t)/g_{a,b}(t)$. 
See~\cite{KawauchiBook} or \cite{RolfsenBook} for example. 
For a positive odd integer $x$, we see that 
$$f_{4,x}(-1) = ((-1)^{4x}-1)((-1)-1) =0,$$
$$g_{4,x}(-1) = ((-1)^{4}-1)((-1)^x-1) =0,$$
\begin{align*}
\left.\frac{d}{dt}f_{4,x}(t)\right|_{t=-1} &= \left(4xt^{4x-1}(t-1) + (t^{4x}-1)\right)|_{t=-1}\\ 
&= \left(4x(-1)^{4x-1}(-1-1) + ((-1)^{4x}-1)\right)\\ 
&= 8x, 
\end{align*}
and
\begin{align*}
\left.\frac{d}{dt}g_{4,x}(t)\right|_{t=-1} &= \left(xt^{x-1}(t^4-1) + 4t^3(t^{x}-1)\right)|_{t=-1}\\ 
&= \left(x(-1)^{x-1}((-1)^4-1) + 4(-1)^3((-1)^{x}-1)\right)\\ 
&= 8. 
\end{align*}
Then, by L'H\^opital's rule, we have 
$\det (T_{4,x}) = 8x/8 =x$. 
\end{proof}

\subsection{Criterion for a knot to be a Montesinos knot}\label{subsec:M-link}

In general, it is difficult to detect whether a given diagram represents a Montesinos knot. 
Here we give a new criterion to this problem, which is obtained by considering the alternation number of a knot. 
For two links $L$ and $L'$, the {\it Gordian distance} $d^\mathrm{x}(L,L')$ between $L$ and $L'$ is defined to be the minimal number of the crossing changes needed to deform $L$ into $L'$. 
For details and studies on the Gordian distance 
we refer the reader to \cite{Baader2005a}, \cite{HirasawaUchida2002}, \cite{IchiharaJong2010}, \cite{Nakanishi2007, NakanishiOhyama2006}, \cite{Ohyama2006}. 
Let $\mathcal{A}$ be the set of all alternating links containing trivial ones. 
The {\it alternation number} of a link $L$~\cite{Kawauchi2010}, denoted by $\alt(L)$, is defined by
$$\alt(L) = \min_{L' \in \mathcal{A}}d^\mathrm{x}(L,L'). $$
In other words, the alternation number of $L$ is the minimal number of crossing changes needed to deform $L$ into an alternating link. 
Note that if $L$ is alternating, then $\alt(L)=0$. 

For a knot $K$, we denote by $\sigma(K)$ the {\it signature} of $K$ and by $s(K)$ the {\it Rasmussen invariant} of $K$, which are introduced by Murasugi~\cite{Murasugi1965} and Rasmussen~\cite{Rasmussen2004} respectively. 
We fix the sign convention of them so that $$s(\text{right-handed trefoil knot}) = 2 \text{ and } \sigma(\text{right-handed trefoil knot}) = -2.$$ 
It is known that these two invariants have similar properties, 
in particular, $\sigma(K)$ and $s(K)$ are even integers for any knot $K$. 
%these are concordance invariants, namely each of them induces a homomorphism form the concordant group of $S^3$ to $\mathbb{Z}$. 
In addition, it is known that how a crossing change affects on $\sigma(K)$ and $s(K)$ as follows. 
Let $K_+$ and $K_-$ be knots with diagrams $D_+$ and $D_-$ which differ only in a small neighborhood as shown in Figure~\ref{fig:+-}. 
Then we have the following. 

\begin{lemma}[see for example {\cite[Theorem 6.4.7]{MurasugiBook}}]\label{lem:+-cc_sig}
$\sigma(K_+) \le \sigma(K_-) \le \sigma(K_+) + 2.$
\end{lemma}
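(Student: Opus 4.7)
The plan is to compare the signatures of $K_+$ and $K_-$ through their Seifert matrices. Recall that $\sigma(K)$ may be computed as the signature of the symmetric bilinear form $V + V^T$, where $V$ is any Seifert matrix for $K$. The key observation is that $K_+$ and $K_-$ admit Seifert surfaces that differ in a very controlled way near the crossing, and this produces a rank-one perturbation of the associated symmetric form.

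First, I would construct compatible Seifert surfaces. Let $K_0$ denote the oriented resolution obtained by smoothing the distinguished crossing, and apply Seifert's algorithm to obtain a Seifert surface $F_0$ for $K_0$. Reintroducing the crossing corresponds to attaching a twisted band $b$ to $F_0$, the sign of the half-twist being dictated by the sign of the crossing. Let $F_{\pm}$ denote the resulting Seifert surface for $K_{\pm}$. Choose a basis $\{\alpha_1, \ldots, \alpha_{n-1}\}$ of $H_1(F_0; \mathbb{Z})$ and extend it to a basis of $H_1(F_{\pm}; \mathbb{Z})$ by appending the core $\beta$ of $b$. With respect to this common basis, the Seifert matrices $V_+$ and $V_-$ agree in every entry except the diagonal entry corresponding to $\beta$, which differs by $1$; with the sign convention fixed by $\sigma(T_{2,3}) = -2$, one checks that $V_- = V_+ + E_{nn}$, where $E_{nn}$ is the elementary matrix with a single $1$ in the last diagonal position.

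It then follows that $S_- = S_+ + 2 E_{nn}$, where $S_{\pm} := V_{\pm} + V_{\pm}^T$. Since $2 E_{nn}$ is positive semi-definite of rank one, Cauchy's interlacing theorem (equivalently, the Courant-Fischer min-max characterization of eigenvalues) implies that each eigenvalue of $S_-$ is bounded below by the corresponding eigenvalue of $S_+$, and that the multiplicities of positive, zero, and negative eigenvalues of these two matrices differ by at most one. Consequently the signature can change by either $0$ or $+2$ but nothing else, yielding the desired inequality $\sigma(K_+) \le \sigma(K_-) \le \sigma(K_+) + 2$.

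The main obstacle is the orientation bookkeeping in the second step: one must verify that passing from the positive crossing to the negative crossing increases, rather than decreases, the single changing diagonal entry of the Seifert matrix. This verification requires careful tracking of the induced orientation of $F_{\pm}$ near the band and the corresponding contribution to self-linking, and it is precisely this sign check that pins down the direction of the inequality so as to be consistent with the normalization $\sigma(T_{2,3}) = -2$.
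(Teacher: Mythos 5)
The paper offers no proof of this lemma---it simply cites \cite[Theorem 6.4.7]{MurasugiBook}---and your Seifert-matrix argument is essentially the standard one found there: the two Seifert surfaces differ by a full twist on a single band, so $V_-+V_-^T$ is a rank-one positive semidefinite perturbation of $V_++V_+^T$ and eigenvalue interlacing bounds the change in signature. Your sketch is correct as it stands; the only points worth making explicit are that a signature change of exactly $1$ is ruled out because both symmetrized forms are nonsingular (the determinant of a knot is odd), and that in the degenerate case where the smoothed surface $F_0$ is disconnected no new generator $\beta$ appears, whence $V_+=V_-$ and the inequality holds trivially.
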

\begin{lemma}[{\cite[Corollary 4.3]{Rasmussen2004}}]\label{lem:+-cc_s}
$s(K_-) \le s(K_+) \le s(K_-) + 2.$
\end{lemma}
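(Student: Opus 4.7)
The plan is to realize the crossing change as a genus-one oriented cobordism between $K_+$ and $K_-$, apply Rasmussen's slice-genus inequality for $s$ to get a symmetric bound, and then refine it into the asserted one-sided inequality.

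First I would construct the cobordism. Outside a small $3$-ball $B$ around the distinguished crossing, $K_+$ and $K_-$ agree, so the product cobordism suffices there. Inside $B$, perform the oriented resolution of the crossing to obtain an intermediate tangle $L_0 \cap B$; both $K_\pm \cap B$ are recovered from $L_0 \cap B$ by an oriented $1$-handle attachment (saddle) that reconstitutes the crossing with the appropriate sign. Gluing the two saddles inside $B$ to the product outside $B$ yields an oriented surface $\Sigma \subset S^3 \times [0,1]$ with $\partial \Sigma = -K_+ \sqcup K_-$, Euler characteristic $\chi(\Sigma) = -2$, and hence genus $1$.

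Next I would invoke Rasmussen's main theorem from \cite{Rasmussen2004}: any smoothly embedded oriented cobordism of genus $g$ between knots $K$ and $K'$ satisfies $|s(K) - s(K')| \le 2g$. Applied to $\Sigma$ this yields $|s(K_+) - s(K_-)| \le 2$, and since $s$ is even-valued we are left with $s(K_+) \in \{s(K_-) - 2,\, s(K_-),\, s(K_-) + 2\}$.

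To rule out the possibility $s(K_+) = s(K_-) - 2$ and establish the asymmetric inequality, I would analyse the two saddles separately in the Lee deformation of Khovanov homology. The canonical Lee cycle associated with a given orientation, whose quantum grading determines $s$, is shifted by the chain map induced by each saddle in a way whose direction depends on whether the saddle merges or splits oriented components. Tracking this cycle through the composition $K_+ \leadsto L_0 \leadsto K_-$ shows that $s$ cannot strictly drop when passing from $K_-$ to $K_+$, which gives $s(K_-) \le s(K_+)$. The main obstacle is precisely this last step: the symmetric bound follows immediately from the cobordism genus, but the asymmetric refinement is genuinely orientation-sensitive and cannot be extracted from the topology of $\Sigma$ alone; it requires the algebraic data of the Lee chain maps induced by the two saddles, which is the technical heart of \cite[Corollary~4.3]{Rasmussen2004}.
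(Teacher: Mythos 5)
The paper does not prove this lemma at all: it is imported verbatim from Rasmussen's paper as \cite[Corollary 4.3]{Rasmussen2004}, so there is no internal argument to compare against. Judged on its own terms, your proposal establishes only half of the statement. The construction of the genus-one cobordism through the oriented resolution $L_0$ is correct (two oriented saddles, $\chi(\Sigma)=-2$, two boundary components, connected, hence $g=1$), and combining it with Rasmussen's cobordism inequality $|s(K)-s(K')|\le 2g$ correctly yields the symmetric bound $|s(K_+)-s(K_-)|\le 2$. You are also right that this symmetric bound can never be upgraded to the one-sided inequality by topology alone, since the cobordism can be read in either direction.

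The gap is that the one-sided refinement $s(K_-)\le s(K_+)$ --- which is the entire content of the lemma beyond the trivial symmetric estimate --- is not proved. You describe the mechanism (tracking the canonical Lee classes through the two saddle maps) and then state that carrying it out ``is the technical heart of \cite[Corollary~4.3]{Rasmussen2004}''; deferring the decisive step to the very corollary being proved is circular, so as written this is not a proof. To close the gap along the route you propose you would have to confront two concrete issues. First, $L_0$ is a two-component link, and $s$ is not defined for links in Rasmussen's framework, so ``the filtration level of the canonical cycle of $L_0$'' must either be defined (as in the later extensions of $s$ to links) or avoided by working directly with the quantum filtration on the Lee homology of $L_0$. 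Second, the asymmetry has to be extracted from the sign of the distinguished crossing: one of the two saddles is a merge and the other a split, and the induced maps on canonical Lee generators behave differently in the quantum filtration (a merge sends a product of canonical classes to a canonical class without loss of filtration level beyond the degree shift, while a split does not); only by computing these shifts explicitly, with the correct Khovanov grading conventions for a positive versus a negative crossing, does the inequality acquire a preferred direction. That computation is exactly what is missing.
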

\begin{figure}[htb]
\includegraphics[width=.2\textwidth]{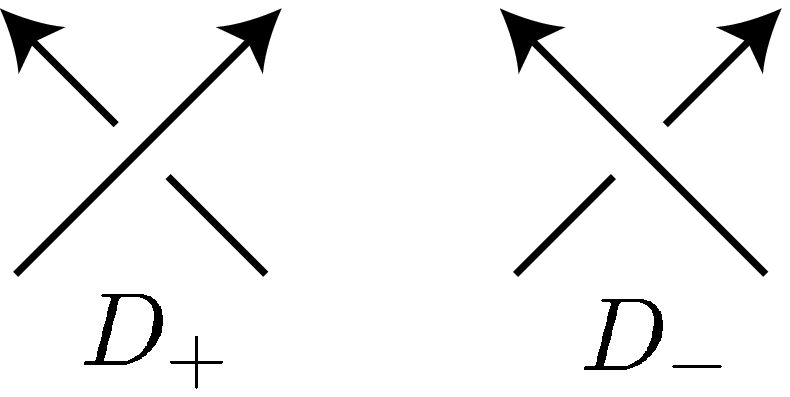}
\caption{}\label{fig:+-}
\end{figure}

Then we have the following. 

\begin{criterion}\label{cri:Monte}
For a knot $K$, if $|s(K) + \sigma(K)| \ge 4$, then $K$ is not equivalent to a Montesinos knot. 
\end{criterion}
\begin{proof}
For a knot $K$, we have $|s(K) + \sigma(K)|/2 \leq \alt(K)$~\cite[Corollary 1.3]{Abe2009}. 
Here we note that if $K$ is alternating, then we have $s(K) + \sigma(K) =0$~\cite[Theorem 3]{Rasmussen2004}. 
It was shown by Abe, Kishimoto, and the second author that the alternation number of a Montesinos link is zero or one~\cite[Proposition A.5]{AbeKishimoto2009}. 
This completes the proof. 
\end{proof}

\subsection{Lemmas to estimate the Rasmussen invariant and the signature}\label{subsec:lemmas} 

To use Criterion~\ref{cri:Monte}, we need to calculate or estimate the Rasmussen invariant and the signature of a knot. 
First we give a lemma to estimate the Rasmussen invariant. 
For a knot $K$, we denote by $g(K)$ the genus (i.e.~the minimal genus of Seifert surfaces for $K$) and by $g_*(K)$ the slice genus (i.e.~the minimal genus of a smoothly embedded orientable surface in the $4$-ball, whose boundary is $K$). 
For a diagram $D$, we denote by $c(D)$ the number of crossings of $D$, 
by $O(D)$ the number of Seifert circles of $D$, and by $w(D)$ the writhe of $D$, that is, the number of positive crossings of $D$ minus the number of negative crossings of $D$. 
A diagram $D$ is called {\it positive} (resp.~{\it negative}) if all crossings of $D$ are positive (resp.~negative), and a knot is called {\it positive} if it admits a positive diagram. 

\begin{lemma}\label{lem:sharp-rasmussen}
Let $K$ and $K'$ be positive knots with positive diagrams $D$ and $D'$ 
which differ only in a small neighborhood as shown in Figure~\ref{fig:sharp_2}. 
Then we have $$s(K) - s(K') = 8.$$ 
\end{lemma}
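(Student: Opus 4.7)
The plan is to use the well-known formula for the Rasmussen invariant of a positive knot. For any positive diagram $D$ of a positive knot $K$, one has
$$s(K) = c(D) - O(D) + 1.$$
This identity follows from combining Rasmussen's theorem $s(K) = 2g_*(K)$ with the fact that applying Seifert's algorithm to a positive diagram yields a surface realizing both the Seifert genus and the slice genus, of genus $(c(D)-O(D)+1)/2$. Some authors credit this explicit formula to Rudolph (from the slice–Bennequin inequality for positive braids/diagrams), and in the present setting it is probably the cleanest route.

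Given this, the plan is to apply the formula to both $D$ and $D'$, so that
$$s(K) - s(K') = \bigl(c(D) - c(D')\bigr) - \bigl(O(D) - O(D')\bigr),$$
and then reduce the statement to the purely combinatorial claim that the local replacement shown in Figure~\ref{fig:sharp_2} contributes exactly $8$ to $(\Delta c) - (\Delta O)$. Since the two diagrams agree outside a small disk $B$, the change in crossing number is read off immediately from the two tangles inside $B$. The change in the number of Seifert circles is also local: orienting both tangles consistently with the fixed orientations of the four endpoints on $\partial B$, one smooths all crossings in $B$ and compares the resulting arc patterns inside the disk; since the strand connectivity outside $B$ is identical for $D$ and $D'$, the difference $O(D)-O(D')$ equals the difference between the numbers of closed components produced by closing these two local smoothings with the common outside arcs. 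This is a finite bookkeeping computation that I would carry out case by case on the strand-connectivity in $B$.

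The main obstacle I anticipate is only the combinatorial accounting: one has to verify in each possible outside-connection pattern of the four endpoints on $\partial B$ (there are essentially two, determined by which endpoints are joined outside) that the net effect is the same and equals $8$. Once that is done, together with positivity of both diagrams (needed to apply the formula $s=c-O+1$), the equality $s(K)-s(K')=8$ follows. No deeper input from Khovanov or Heegaard Floer theory should be required beyond the positive-knot formula, which is the heavy machinery being invoked.
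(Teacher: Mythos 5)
Your proposal is correct and follows essentially the same route as the paper: the authors likewise invoke $s(K)=2g_*(K)=2g(K)$ for positive knots together with Cromwell's result that Seifert's algorithm on a positive diagram gives a minimal genus surface, yielding $s(K)=c(D)-O(D)+1$, and then read off $c(D)=c(D')+8$ and $O(D)=O(D')$ directly from the local tangles in Figure~\ref{fig:sharp_2}. The only difference is that the paper treats the combinatorial accounting as immediate from the figure rather than as a case analysis.
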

\begin{proof}
Let $K$ and $K'$ be positive knots with positive diagrams $D$ and $D'$ 
which differ only in a small neighborhood as shown in Figure~\ref{fig:sharp_2}. 
Then we have $c(D) = c(D') + 8$ and $O(D) = O(D')$. 
For a positive knot $K$ with a positive diagram $D$, 
we have $s(K) = 2g_*(K) =2g(K)$~\cite[Theorem 4]{Rasmussen2004}. 
Furthermore we have $2g(K) = c(D) -O(D) +1$ since a Seifert surface obtained from a positive diagram by applying Seifert's algorithm is minimal genus~\cite[Corollary 4.1]{Cromwell1989}. 
Thus we have 
$s(K) = c(D) - O(D) +1$ and $s(K') = (c(D') + 8) + O(D) +1$, and then we have $s(K) - s(K') = 8$. 
\begin{figure}[htb]
\includegraphics[width=.35\textwidth]{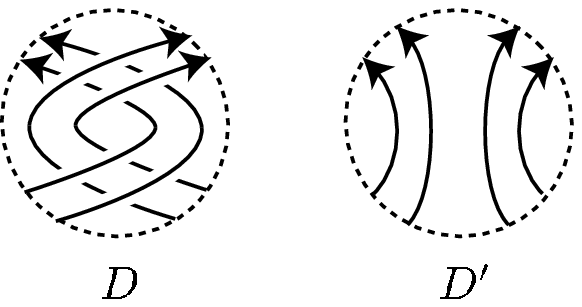}
\caption{}\label{fig:sharp_2}
\end{figure}
\end{proof}

\begin{remark}\label{rem:positiveS-sigma}
It is known that $|\sigma(K)| \le 2g_*(K)$ holds for a knot $K$. 
%From the above fact obtained by Rasmussen in \cite{Rasmussen2004}, 
Therefore we have $s(K) + \sigma(K) \ge 0$ if $K$ is positive knot. 
\end{remark}

Next we give a lemma to estimate the signature of a knot. 

\begin{lemma}[{\cite[Theorem 3.2]{Murakami1985}}]\label{lem:sharp-sign}
Let $K$ and $K'$ be knots with diagrams $D$ and $D'$ which differ only in a small neighborhood as shown in Figure~\ref{fig:sharp}. 
Let $D_0$ be the diagram which differ from $D$ (and $D'$) in a small neighborhood as shown in Figure~\ref{fig:sharp0}. 
If $D_0$ is a 2-component link diagram, then we have $$2 \le \sigma(K') -\sigma(K) \le 4.$$ 
\end{lemma}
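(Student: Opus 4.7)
The plan is to decompose the sharp move into two elementary crossing changes, apply Lemma~\ref{lem:+-cc_sig} twice to obtain the upper bound, and then use the hypothesis that $D_0$ represents a 2-component link to rule out the ``no change'' case and promote the lower bound from $0$ to $2$.

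For the upper bound, I would read off from Figure~\ref{fig:sharp} that $D$ and $D'$ differ by flipping exactly two crossings of the same sign (the symmetric form of the target interval $\{2,4\}$ strongly suggests a 2-crossing-change decomposition). Letting $K''$ denote the intermediate knot obtained by performing only the first crossing change, Lemma~\ref{lem:+-cc_sig} gives $\sigma(K'') - \sigma(K) \in \{0,2\}$ and $\sigma(K') - \sigma(K'') \in \{0,2\}$, so
$$\sigma(K') - \sigma(K) \in \{0, 2, 4\}.$$
This immediately yields $\sigma(K') - \sigma(K) \le 4$.

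For the lower bound, I must exclude the value $0$, and this is precisely where the hypothesis that $D_0$ is a 2-component link diagram enters. The geometric content is that simultaneously taking the oriented smoothing at both sharp-move crossings produces a genus-one cobordism from $K$ to $K'$ passing through the 2-component link $L_0 = D_0$. Applying Tristram--Levine type signature inequalities to this cobordism (or, equivalently, performing a direct Goeritz-matrix computation on a checkerboard surface shared by $D$, $D'$, and $D_0$), the two-component condition contributes a definite positive eigenvalue to the signature form and forces $\sigma(K') - \sigma(K) \ge 2$.

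The main obstacle is exactly this last step: two applications of Lemma~\ref{lem:+-cc_sig} alone give only $0 \le \sigma(K') - \sigma(K) \le 4$, and excluding the value $0$ requires careful bookkeeping of Seifert (or Goeritz) forms through the move together with the topology of $L_0$. This quantitative lower bound is the core content of \cite[Theorem 3.2]{Murakami1985}, and reproducing the Seifert-matrix analysis in detail would go well beyond the scope of a sketch; I would therefore invoke Murakami's theorem directly for the inequality $\sigma(K') - \sigma(K) \ge 2$, while noting that the upper bound falls out of the decomposition argument above.
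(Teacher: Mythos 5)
The paper does not actually prove this lemma: it is quoted verbatim from Murakami, and the text following the statement simply points to \cite[Case I of the proof of Theorem 3.2]{Murakami1985}, noting that the computation there is a direct Gordon--Litherland (Goeritz matrix) calculation. So your decision to invoke Murakami for the lower bound is consistent with what the paper itself does. The problem is that you invoke him only for the lower bound and claim the upper bound $\sigma(K')-\sigma(K)\le 4$ as an easy consequence of a two-crossing-change decomposition, and that part of your argument does not survive scrutiny.

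The \#-move is a move on a four-crossing tangle, and it cannot be realized by two crossing changes. You can see this from within the paper: Lemma~\ref{lem:sharp-rasmussen} shows that a single \#-move can change the Rasmussen invariant by $8$, while each crossing change moves $s$ by at most $2$ (Lemma~\ref{lem:+-cc_s}); hence at least four crossing changes are needed to realize the move, and the ``symmetric form of the target interval'' is not evidence of a two-crossing decomposition. Applying Lemma~\ref{lem:+-cc_sig} to the four actual crossing changes yields only $0\le\sigma(K')-\sigma(K)\le 8$, which establishes neither bound in the statement. A further sanity check: the paper remarks that when $D_0$ is a \emph{knot} diagram the correct range is $2\le\sigma(K')-\sigma(K)\le 6$; if the upper bound $4$ followed from a diagram-independent crossing-change count, it could not depend on the number of components of $D_0$. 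So both inequalities --- not just the lower one --- require the hypothesis on $D_0$ and the Goeritz-form bookkeeping, and the honest course here is the one the paper takes: cite Murakami's Theorem 3.2 for the entire double inequality rather than for half of it.
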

The proof of Lemma~\ref{lem:sharp-sign} is achieved by direct calculations of the signatures by using the method due to Gordon and Litherland~\cite{GordonLitherland1978}. 
For the precise proof, see \cite[Case I of the proof of Theorem 3.2]{Murakami1985}. 
We note that in Lemma~\ref{lem:sharp-sign}, if $D_0$ is a knot diagram, then we have $ 2 \le \sigma(K') -\sigma(K) \le 6$. 

\begin{figure}[htb]
\includegraphics[width=.4\textwidth]{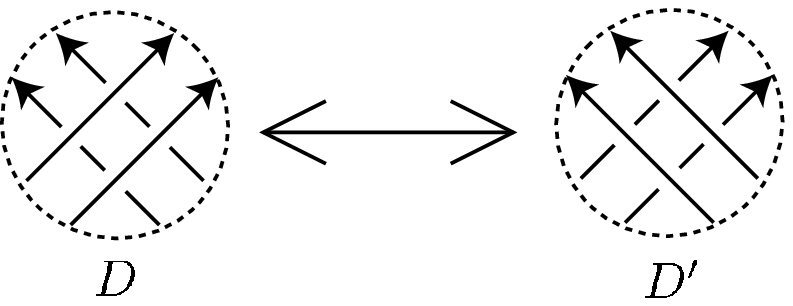}
\caption{A \#-move}\label{fig:sharp}
\end{figure}
\begin{figure}[htb]
\includegraphics[width=.1\textwidth]{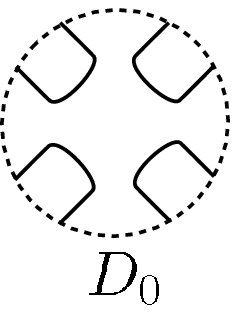}
\caption{}\label{fig:sharp0}
\end{figure}

%=====================================================
\section{Proof of Theorem~\ref{thm:inv}}\label{sec:proof}

We divide the proof of Theorem~\ref{thm:inv} into two propositions (Propositions~\ref{prop:odd} and \ref{prop:even}) 
based on the parity of the integer $p$ 
in the statement of the theorem. 
The two propositions will be proved 
in the following two subsections respectively.

\subsection{The case where $p$ is odd}

In this subsection, we prove the following. 

\begin{proposition}\label{prop:odd}
Let $K=P(p,q,q)$ for odd integers $p, q \geq 3$. 
Then $K$ admits no Seifert fibered surgery.
\end{proposition}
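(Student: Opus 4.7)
The plan is to apply Proposition~\ref{cri:SFSquotient} to the strong inversion $\tau$ of $K=P(p,q,q)$ which exchanges the two $q$-tangles and fixes the $p$-tangle (the symmetry alluded to in Figure~\ref{fig:axis1}). Since $K$ is hyperbolic by Remark~\ref{rem:hyp-pretzel}, for each rational slope $r$ it suffices to show that the link $L_r$ produced by the Montesinos trick is neither a Seifert link nor a Montesinos link; the case of a Seifert fibration with base orbifold different from $S^2$ can be handled separately by an $H_1$-argument on $K(r)$. The first step is to write down a concrete diagram $D_r$ for $L_r$: quotienting $P(p,q,q)$ under $\tau$, the $p$-tangle survives as a twist region meeting $\fix(\tau)$ transversely, the two identified $q$-tangles contribute a second twist region, and the surgery data $r$ is absorbed into a rational tangle. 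The parity assumption that $p$ and $q$ are both odd keeps the number of components of $L_r$ under control.

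For the Seifert-link step I would appeal to Lemma~\ref{lem:SeifertLink}. Computing $\det(L_r)$ directly from $D_r$ via a continued-fraction/matrix expression in $p$, $q$, and the numerator and denominator of $r$, and then comparing with the determinants of the candidate torus knots (using Lemma~\ref{lem:detT4} for $T_{4,x}$, which is the most suggestive reference family given the placement of this lemma in the paper) and of the two-component torus links and torus-knot-plus-core configurations, should whittle the admissible triples $(p,q,r)$ down to a short list that can be eliminated by finer invariants such as $\sigma(L_r)$ or $\Delta_{L_r}(t)$.

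For the Montesinos step I would invoke Criterion~\ref{cri:Monte} and lower-bound $|s(L_r)+\sigma(L_r)|$. After possibly mirroring, $D_r$ should be positive, so $s(L_r)$ can be pinned down exactly by Lemma~\ref{lem:sharp-rasmussen} through comparison with a positive reference diagram whose Seifert genus is computed by Cromwell's formula $2g=c(D)-O(D)+1$; simultaneously, a sequence of $\#$-moves controlled by Lemma~\ref{lem:sharp-sign} together with individual crossing changes controlled by Lemma~\ref{lem:+-cc_sig} bounds $\sigma(L_r)$. Combined with Remark~\ref{rem:positiveS-sigma} applied at the reference knot, this should yield $s(L_r)+\sigma(L_r)\ge 4$ for all odd $p,q\ge 3$ and every $r$.

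The main obstacle will be producing this uniform estimate of $s+\sigma$: the crossing number of $D_r$ grows with $p$, $q$, and the denominator of $r$, while the Lemmas \ref{lem:sharp-rasmussen} and \ref{lem:sharp-sign} control $s$ and $\sigma$ through moves that individually shift the two invariants in possibly opposing directions. Choreographing the sequence of moves so that the shifts in $s$ and in $\sigma$ reinforce rather than cancel is the delicate part of the argument. A secondary obstacle is the bookkeeping needed for small or degenerate $r$ where $L_r$ might split off a component or $D_r$ might fail to be visibly positive; these isolated values will be checked by hand against both the Seifert-link and the Montesinos-link conclusions.
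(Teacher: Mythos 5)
Your outline matches the paper's strategy at the top level (Montesinos trick, then rule out Seifert links via Lemma~\ref{lem:SeifertLink} and Montesinos knots via Criterion~\ref{cri:Monte}), but three essential steps are missing, and without them the argument does not close. The most serious omission is the restriction on the slope $r$. The paper first proves that a Seifert fibered slope must be an \emph{integer} with $|r|\le 8$: integrality because $P(p,q,q)$ is alternating (so all exceptional surgeries are integral), and the bound because $K$ has genus one, so $K(0)$ is toroidal and the exceptional slopes are confined to an interval of length at most $8$ about $0$. This bound is not a convenience: the positivity of the quotient diagram $D_o(p,q,r)$ requires $r+2(p+q)>0$, so for arbitrarily negative $r$ your ``check the degenerate values by hand'' covers infinitely many slopes, not isolated ones; and for non-integral $r$ the quotient link carries a genuine rational tangle and none of the positivity or braid arguments apply. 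Your proposed ``$H_1$-argument'' for base orbifolds other than $S^2$ is also not what the paper does --- it invokes the prior classifications of finite/cyclic and toroidal Seifert fibered surgeries on pretzel knots to reduce to base $S^2$ with exactly three exceptional fibers, and that reduction is also what licenses the assumption that a putative Montesinos quotient has length three.

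Two further gaps. First, in the Seifert-link step the determinant cannot do the work you assign to it: for integral $r$ one gets $\det(L_r)=|H_1(K(r))|=|r|$, which matches $\det(T_{4,x})=x$ for every admissible $x$, so nothing is ``whittled down.'' The paper instead exhibits $K_o$ as the closure of a positive $4$-braid containing a full twist, so by Franks--Williams its braid index is exactly $4$; only then is the candidate family $T_{4,x}$ forced, the determinant pins $x=|r|\le 7$, and a \emph{genus} comparison ($g(K_o)\ge 13$ versus $g(T_{4,x})\le 9$, both computed from positive diagrams via Cromwell) gives the contradiction. You guessed the family $T_{4,x}$ from the placement of Lemma~\ref{lem:detT4} but supplied no reason to exclude $T_{a,b}$ for other $a$. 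Second, when $r$ is even the quotient $L_r$ is a two-component link (a copy of $T_{2,q}$ together with $T_{2,2p+q}$), and Criterion~\ref{cri:Monte} applies only to knots; the paper handles this case by a bridge-index argument (a length-three Montesinos link has bridge index at most $3$, while a two-component link with both components knotted has bridge index at least $4$) and by noting that neither type (S2) nor (S3) of Lemma~\ref{lem:SeifertLink} fits. On the positive side, your Montesinos step for the knot case is essentially the paper's: a single \#-move (not a choreographed sequence) combined with Lemmas~\ref{lem:sharp-rasmussen} and~\ref{lem:sharp-sign} and Remark~\ref{rem:positiveS-sigma} already yields $s(K_o)+\sigma(K_o)\ge 4$.
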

\begin{proof}
Set $K=P(p,q,q)$ for odd integers $p,q \ge 3$. 
By the results in \cite{FuterIshikawaKabayaMattmanShimokawa2009}, \cite{IchiharaJong2009}, and \cite{IchiharaJong2010a}, 
if $K(r)$ is a Seifert fibered manifold for $r \in \mathbb{Q}$, then $K(r)$ is atoroidal and has the infinite fundamental group, 
in particular, the base orbifold must be $S^2$ having just three exceptional fibers. 
This fact is also implied by \cite[Corollary 1.4]{Motegi2003} since $K$ admits two different strong inversions. 
Thus it suffices to show that $K(r)$ is not a Seifert fibered manifold with the base orbifold $S^2$ having just three exceptional fibers for any $r \in \mathbb{Q}$. 

The proof is achieved by two steps as follows: 
First, we find a restriction on $r$, and next we use Proposition~\ref{cri:SFSquotient} to complete the proof.

\begin{claim}\label{clm:slope-odd}
If $K(r)$ is a Seifert fibered manifold, then $r$ is an integer with $|r| \leq 8$. 
\end{claim}
\begin{proof}
Suppose that $K(r)$ is a Seifert fibered manifold for $r \in \mathbb{Q}$. 
Then $r$ is an integer since $K$ is alternating~\cite[Theorem 1.1]{Ichihara2008a}. 
Since $K$ is a knot of genus one, the exterior of $K$ contains an essential once-punctured torus as a Seifert surface for $K$. 
Thus, the manifold $K(0)$ contains an essential torus, and the $0$-surgery is exceptional. 
Therefore we have $|r| \leq 8$~\cite[Theorem 1.1]{Ichihara2008} (see also \cite[Theorem 1.2]{LackenbyMeyerhoff2008}). 
\end{proof}

It suffices to show that $K(r)$ is not a Seifert fibered manifold with the base orbifold $S^2$ having just three exceptional fibers for $r \in \mathbb{Z}$ with $|r| \leq 8$. 
Next we apply Proposition~\ref{cri:SFSquotient} to complete the proof of Proposition~\ref{prop:odd}. 
Take an axis $\alpha_1$ as shown in Figure~\ref{fig:axis1} (left), which induces a strong inversion of $K$. 
Applying the Montesinos trick, we obtain the link $K_o = K_o(p,q,r)$ which is represented by the diagram $D_o(p,q,r)$ as shown in Figure~\ref{fig:axis1} (center and right). 
By Proposition~\ref{cri:SFSquotient}, it suffices to show that $K_o$ is equivalent to neither a Montesinos link nor a Seifert link. 
Note that $K_o$ is either a knot if $r$ is odd, or a $2$-component link if $r$ is even (see Figure~\ref{fig:axis1}). 
In the following two claims (Claims~\ref{clm:odd-link1}, \ref{clm:odd-link2}), we consider the case where $K_o$ is a $2$-component link, namely the case where $r$ is even.

\begin{figure}[htb]
\includegraphics[width=.9\textwidth]{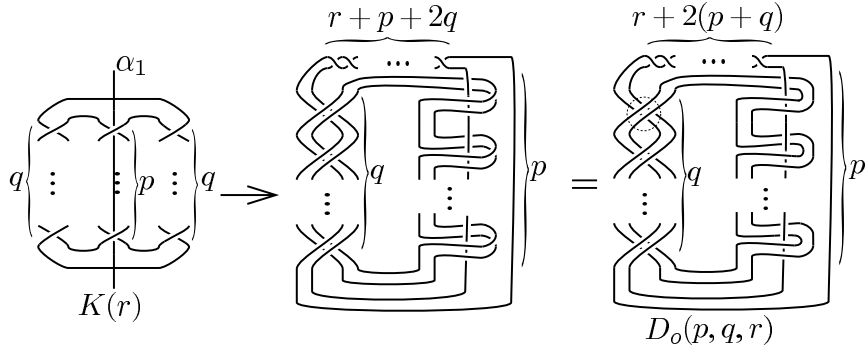}
\caption{The link $K_o = K_o(p,q,r)$ is represented by the diagram $D_o(p,q,r)$.}\label{fig:axis1}
\end{figure}

\begin{claim}\label{clm:odd-link1}
If $r$ is an even integer, then $K_o$ is not equivalent to a Montesinos link. 
\end{claim}
\begin{proof}
Assume that $r$ is an even integer. 
Suppose for a contradiction that $K_o$ is equivalent to a Montesinos link. 
We may assume that the length of $K_o$ equals three since $K(r)$ has just three exceptional fibers if $K(r)$ is Seifert fibered. 
Then we have $b(K_o) \leq 3$ (see \cite{BoileauZieschang1985} for example), where $b(L)$ denotes the bridge index of a link $L$. 
On the other hand,  $K_o$ is a $2$-component link which consists of $T_{2,q}$ and $T_{2,2p+q}$ (see Figure~\ref{fig:axis1}). 
Since each component of $K_o$ is non-trivial, we have $b(K_o) \geq 4$. 
Now we have a contradiction and complete the proof of Claim~\ref{clm:odd-link1}. 
\end{proof}

\begin{remark}
In the proof of Claim~\ref{clm:odd-link1}, we use the bridge index to show that a given link is non-Montesinos 
since Criterion~\ref{cri:Monte} works only for knots. 
\end{remark}

\begin{claim}\label{clm:odd-link2}
If $r$ is an even integer, then $K_o$ is not equivalent to a Seifert link. 
\end{claim}
\begin{proof}
Assume that $r$ is even. 
Then, as mentioned above, $K_o$ is a $2$-component link which consists of $T_{2,q}$ and $T_{2,2p+q}$. 
If $K_o$ is a Seifert link, then $K_o$ is equivalent to a link of type (S2) or (S3) in Lemma~\ref{lem:SeifertLink}. 
However $K_o$ is not appropriate for them since a link of type (S2) consists of parallel components and a link of type (S3) possesses a trivial component. 
\end{proof}

Now we may assume that $r$ is odd with $|r| \le 7$, namely, $K_o$ is a knot.

\begin{claim}\label{clm:Monte-odd}
$K_o$ is not equivalent to a Montesinos knot. 
\end{claim}
\begin{proof}
Since $r+2(p+q) \geq -7 + 2(3+3) = 5 > 0$, the diagram $D_o(p,q,r)$ is a positive diagram, and then $K_o$ is a positive knot (see Figure~\ref{fig:axis1}). 
Let $K_o'$ be the knot obtained from $K_o$ by applying a \#-move in the broken circle depicted in Figure~\ref{fig:axis1}. 
Here a \#-move is a local operation on diagrams as shown in Figure~\ref{fig:sharp}. 
Note that $K_o'$ is also positive since $q \geq 3$. 
Let $D_0$ be the diagram obtained from $D_o(p,q,r)$ 
by changing the tangle diagram in the broken circle depicted in Figure~\ref{fig:axis1} 
to that depicted in Figure~\ref{fig:sharp0}. 
Notice that $D_0$ is a 2-component link diagram. 
Then by Lemma~\ref{lem:sharp-sign}, we have 
$$\sigma(K_o') - \sigma(K_o) \leq 4. $$ 
On the other hand, by Lemma~\ref{lem:sharp-rasmussen}, we have 
$$s(K_o) - s(K_o') = 8. $$
By Remark~\ref{rem:positiveS-sigma}, we have 
$$ s(K_o') + \sigma(K_o') \ge 0. $$ 
Thus, we have
\begin{align*}
s(K_o) + \sigma(K_o) & \ge s(K_o') + 8 + \left(\sigma(K_o')-  4 \right) \\
& = s(K_o') + \sigma(K_o') + 4 \\
& \geq 4.
\end{align*}
Therefore $K_o$ is not equivalent to a Montesinos knot by Criterion~\ref{cri:Monte}. 
\end{proof}

\begin{remark}\label{rem:ConstNonMonte}
The argument in the proof of Claim~\ref{clm:Monte-odd} possibly gives a method to construct examples of non-Montesinos knots. 
\end{remark}

We denote by $\sigma_i$ ($i=1,\dots,n-1$) the Artin's standard generator of the $n$ strands braid group $B_n$ and by $\Delta_n^2$ ($\in B_n$) the braid represented by a positive full twist on $n$ strands. 
For basic terminologies in braid theory, we refer the reader to \cite{MurasugiBook}, \cite{MurasugiKurpitaBook}.

\begin{claim}\label{clm:4torus-odd}
$K_o$ is not equivalent to a torus knot. 
\end{claim}
\begin{proof}
As shown in Figure~\ref{fig:axis1}, $K_o$ is the closure of a four-strand positive braid
$$(\sigma_2\sigma_3\sigma_1\sigma_2)^{q} (\sigma_2\sigma_3^2\sigma_2)^p \sigma_1^{2p+2q+r}.$$ 
Then we can easily see that this braid is equivalent to a four-strand positive braid with a full twist as follows: 
\begin{align*}
(\sigma_2\sigma_3\sigma_1\sigma_2)^{q} (\sigma_2\sigma_3^2\sigma_2)^p \sigma_1^{2p+2q+r} 
&= \sigma_1^4 (\sigma_2\sigma_3\sigma_1\sigma_2)^{q} (\sigma_2\sigma_3^2\sigma_2)^p \sigma_1^{2p+2q+r-4} \\
&= (\sigma_2\sigma_3\sigma_1\sigma_2) \sigma_3^4 (\sigma_2\sigma_3\sigma_1\sigma_2)^{q-1} (\sigma_2\sigma_3^2\sigma_2)^p \sigma_1^{2p+2q+r-4} \\
&= (\sigma_2\sigma_3\sigma_1\sigma_2) \sigma_3^2 (\sigma_2\sigma_3\sigma_1\sigma_2)^{q-1} \sigma_3^2 (\sigma_2\sigma_3^2\sigma_2)^p \sigma_1^{2p+2q+r-4} \\
&= \sigma_3^2 (\sigma_2\sigma_3\sigma_1\sigma_2) \sigma_3^2 (\sigma_2\sigma_3\sigma_1\sigma_2)^{q-1} (\sigma_2\sigma_3^2\sigma_2)^p \sigma_1^{2p+2q+r-4} \\
&= (\sigma_2\sigma_3\sigma_1\sigma_2) \sigma_1^2 \sigma_3^2 (\sigma_2\sigma_3\sigma_1\sigma_2)^{q-1} (\sigma_2\sigma_3^2\sigma_2)^p \sigma_1^{2p+2q+r-4} \\
&= \Delta_4^2(\sigma_2\sigma_3\sigma_1\sigma_2)^{q-2} (\sigma_2\sigma_3^2\sigma_2)^p \sigma_1^{2p+2q+r-4} 
\end{align*}
(see Figure~\ref{fig:fulltwist}). 
Note that $q -2 \ge 3-2 = 1$ and $2p + 2q +r -4 \ge 2(3+3) -7 -4 =1$. 
This implies that the braid index of $K_o$ equals four~\cite[Corollary 2.4]{FranksWilliams1987}. 
Thus, it suffices to show that $K_o$ is not equivalent to a torus knot $T_{4,x}$. 
Suppose for a contradiction that $K_o$ is equivalent to a torus knot $T_{4,x}$. 
We may assume that $x$ is positive since $K_o$ is a positive knot. 
Since $K(r)$ is the double branched covering space of $S^3$ branched along $K_o$, we have 
$$\det(K_o) = |H_1(K(r))| $$ 
(see \cite[p.~213]{RolfsenBook} for example). 
Since $r$ is an integer, we have 
$$|H_1(K(r))| = |r|. $$ 
On the other hand, by Lemma~\ref{lem:detT4}, we have 
$$\det(T_{4,x})=x. $$
Thus, we have $$x=|r|.$$
Here we note that $-7 \leq r \leq 7$, and thus $x \le 7$. 

Next we calculate the genera of $K_o$ and $T_{4,x}$. 
Since $K_o$ is a positive knot with a positive diagram $D_o(p,q,r)$, 
$$g(K_o) = (c(D_o(p,q,r)) - O(D_o(p,q,r)) + 1)/2$$ 
holds as claimed in the proof of Lemma~\ref{lem:sharp-rasmussen}. 
Then we have 
\begin{align*}
g(K_o) &= (\left(4p+4q+2p+2q+r\right)-4+1)/2\\
&= 3(p+q) + (r-3)/2\\
&\geq 3(3+3) + (-7-3)/2 = 13.
\end{align*}
On the other hand, since $T_{4,x}$ is also positive, using the standard diagram of a torus knot, we have
\begin{align*}
g(T_{4,x}) &= 3(x-1)/2\\ 
&\le 3(7-1)/2 = 9. 
\end{align*}
Thus, we have a contradiction and complete the proof of Claim~\ref{clm:4torus-odd}. 
\end{proof}
Now we complete the proof of Proposition~\ref{prop:odd}. 
\end{proof}

\begin{figure}[htb]
\includegraphics[width=.2\textwidth]{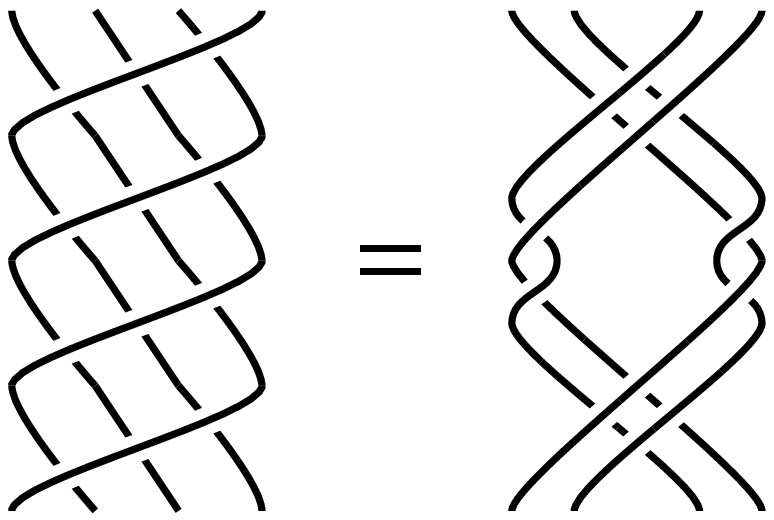}
\caption{}\label{fig:fulltwist}
\end{figure}

\begin{remark}
We can use the Rasmussen invariant instead of the genus in the last step of the proof of Claim~\ref{clm:4torus-odd} since $s(K) = 2g(K) = c(D) - O(D) +1$ for a positive knot $K$ with a positive diagram $D$. 
\end{remark}

\subsection{The case where $p$ is even}

In this subsection, we prove the following. 

\begin{proposition}\label{prop:even}
Let $K=P(2n,q,q)$ for an integer $n \geq 1$ and an odd integer $q \geq 3$. 
Then $K$ admits no Seifert fibered surgery.
\end{proposition}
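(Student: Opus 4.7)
The plan is to follow the same scheme as the proof of Proposition~\ref{prop:odd}, with adjustments required by the parity change in $p$.

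As a first step, the classifications from \cite{FuterIshikawaKabayaMattmanShimokawa2009}, \cite{IchiharaJong2009}, and \cite{IchiharaJong2010a} again reduce the problem to ruling out atoroidal Seifert fibered surgeries on $K = P(2n,q,q)$ whose base orbifold is $S^2$ with exactly three exceptional fibers. Since the standard pretzel diagram of $P(2n,q,q)$ is alternating (all three parameters $2n,q,q$ are positive), \cite[Theorem 1.1]{Ichihara2008a} forces any exceptional slope $r$ to be an integer, and a uniform bound of the form $|r|\le N$ then follows from the general exceptional-slope bounds in \cite{Ichihara2008} or \cite{LackenbyMeyerhoff2008}, combined with the observation that a Seifert surface for $K$ furnishes an essential surface in $K(0)$.

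Next, I would apply Proposition~\ref{cri:SFSquotient} using the strong inversion whose axis $\alpha_1$ exchanges the two $q$-tangles (the analogue of Figure~\ref{fig:axis1}), and obtain a link $K_e = K_e(n,q,r)$ via the Montesinos trick. In this even-$p$ setting the parity of the number of components of $K_e$ as a function of $r$ is swapped relative to the odd-$p$ case, so I would first determine, for each parity of $r$, whether $K_e$ is a knot or a two-component link. When $K_e$ is a two-component link, I expect the components to be nontrivial torus knots, and the bridge-index argument of Claim~\ref{clm:odd-link1} together with Lemma~\ref{lem:SeifertLink} as used in Claim~\ref{clm:odd-link2} should carry over almost verbatim to rule out Montesinos and Seifert links.

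When $K_e$ is a knot, the plan is to establish $s(K_e) + \sigma(K_e) \ge 4$ by a \#-move at an appropriate crossing neighborhood, applying Lemmas~\ref{lem:sharp-rasmussen} and \ref{lem:sharp-sign} together with Remark~\ref{rem:positiveS-sigma}, and then to invoke Criterion~\ref{cri:Monte} to exclude the Montesinos case. To rule out $K_e$ being a torus knot, I would exhibit a four-strand positive braid representation of $K_e$ containing a full twist $\Delta_4^2$, apply \cite[Corollary 2.4]{FranksWilliams1987} to pin the braid index at four, and then compare the determinant (equal to $|r|$ via the Montesinos trick, with Lemma~\ref{lem:detT4} on the torus-knot side) and the Seifert genus (via the positive-diagram formula) with the candidates $T_{4,x}$ to reach a contradiction. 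The main obstacle will be verifying the positivity assumptions required to apply Lemma~\ref{lem:sharp-rasmussen} and the positive-braid genus formula in this even-$p$ setting, together with the full-twist extraction in the braid word, since the explicit crossing counts differ from those in the proof of Proposition~\ref{prop:odd}; small-parameter cases (for instance $n=1$, or $r$ near the boundary of its allowed range) may require separate treatment if the generic positivity or full-twist conditions degenerate.
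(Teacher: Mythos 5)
Your overall scheme (reduce to three exceptional fibers over $S^2$, restrict the slope, apply the Montesinos trick, then exclude Montesinos and torus knots via Criterion~\ref{cri:Monte} and the braid-index/determinant/genus comparison) matches the paper's, but your slope-restriction step contains a genuine gap, and it is precisely the step where the even case differs essentially from the odd case. You propose to get $|r|\le N$ from the observation that ``a Seifert surface for $K$ furnishes an essential surface in $K(0)$.'' That argument works in Proposition~\ref{prop:odd} only because $P(p,q,q)$ with $p,q$ odd has genus one, so the capped-off Seifert surface is an essential \emph{torus} and $K(0)$ is toroidal, hence exceptional; then Lackenby--Meyerhoff bounds the distance from the exceptional slope $0$. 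For $K=P(2n,q,q)$ the genus is larger, an essential closed surface of genus $\ge 2$ does not make $K(0)$ exceptional, and no analogous bound follows. The paper instead exploits the cyclic period $2$ of $P(2n,q,q)$ (the $\pi$-rotation exchanging the two $q$-tangles), whose factor knot is $T_{2,q}$, and invokes the Miyazaki--Motegi result (Lemma~\ref{lem:period}) to pin the slope down to exactly $r=4q\pm 1$. This sharp value, not merely a bound, is what the rest of the proof needs.

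Concretely, two later steps of your plan depend on it. First, since $r=4q\pm1$ is odd, $K_e$ is automatically a knot, so the two-component analysis you anticipate never arises; more importantly, positivity of the diagram $D_e(n,q,r)$ requires $r+2(2n-q)>0$, which holds for $r=4q\pm1$ but can fail for negative or small $r$ when $q$ is large relative to $n$ (e.g.\ $n=1$, $q$ large), so with only a crude bound $|r|\le N$ you could not apply Lemma~\ref{lem:sharp-rasmussen} or the positive-diagram genus formula. Second, the torus-knot exclusion in the even case does not end in a genus \emph{inequality} as in Claim~\ref{clm:4torus-odd}; it ends in the equations $6n-3q=\pm1$, which are impossible modulo $3$ --- and this only works because $x=\det(K_e)=r=4q\pm1$ is known exactly. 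Without the periodicity argument the proof does not close.
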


The outline of the proof is similar to that of Proposition~\ref{prop:odd}. 
Before we start the proof of Proposition~\ref{prop:even}, 
we consider Seifert fibered surgeries on periodic knots. 
We say a knot $K$ in $S^3$ has a {\it cyclic period} $p$ if there exists an orientation preserving homeomorphism $f : S^3 \rightarrow S^3$ such that $f(K) = K$, $f^p=\id$ ($p>1$), $\fix(f) \neq \emptyset$, and $\fix(f) \cap K = \emptyset$. 
%, where $\fix(f)$ is the set of fixed points of $f$. 
By the affirmative answer to the Smith Conjecture~\cite{MorganBass1984}, the map $f$ is a rotation of $S^3$ about the unknotted circle $\fix(f)$. 
% called an {\it axis}. 
So by taking the quotient $S^3/f$, we obtain $S^3 = S^3/f$ and a new knot $K_f=K/f$. 
We call $K_f$ the \textit{factor knot} of $K$ with respect to the cyclic period. 

By virtue of the studies on Seifert fibered surgeries on periodic knots due to Miyazaki and Motegi~\cite{MiyazakiMotegi2002a}, and also Motegi~\cite{Motegi2003}, we obtain the following.

%\begin{lemma}[{\cite[Corollary 1.4]{Motegi2003}}]\label{lem:orb}
%Suppose that a hyperbolic knot $K$ has at least two symmetries. 
%If $K(r)$ is a Seifert fibered manifold, then the base orbifold is $S^2(n_1,n_2,n_3)$. 
%\end{lemma}

\begin{lemma}[\cite{MiyazakiMotegi2002a}]\label{lem:period}
Let $K$ be a hyperbolic knot which has a cyclic period $2$, 
and $K'$ the factor knot of $K$. 
Suppose that $K(r)$ is a Seifert fibered manifold with the base orbifold $S^2$ having just three exceptional fibers for $r \in \mathbb{Z}$. 
If $K'$ is equivalent to a torus knot $T_{2,q}$ with $q \ge 3$, 
then we have $r= 4q \pm 1$. 
\end{lemma}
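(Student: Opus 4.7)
The plan is to specialize the framework of Miyazaki--Motegi for Seifert fibered surgeries on periodic knots to the case at hand. The cyclic period-$2$ symmetry $f: S^3 \to S^3$ extends to an orientation-preserving involution $\tilde{f}$ of $K(r)$, and the axis $\fix(f)$ closes up to a knot $\tilde{A} \subset K(r)$ which is exactly $\fix(\tilde{f})$. Because $K(r)$ is a small Seifert fibered space (base $S^2$ with three exceptional fibers) and $\tilde{f}$ has non-empty fixed set, the equivariant results of Tollefson and Meeks--Scott on finite group actions on Seifert fibered manifolds guarantee that (after isotopy) $\tilde{f}$ preserves the Seifert fibration. The quotient orbifold $K(r)/\tilde{f}$ therefore inherits a Seifert fibered structure, and its underlying space is a Dehn surgery on the factor knot $K'$.

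Next I would identify the underlying space of $K(r)/\tilde{f}$ as a surgery on $K'$ by tracking the $2$-fold branched covering $\pi: (S^3, K \cup \tilde{A}) \to (S^3, K' \cup A)$. Since $K \to K'$ is a degree-$2$ (unbranched) cover on tubular neighborhoods, the meridian $\mu$ of $K$ descends to the meridian of $K'$ while the preferred longitude $\lambda$ of $K$ descends, up to a linking-number correction involving $\mathrm{lk}(K', A)$, to twice the longitude of $K'$. This gives an explicit formula expressing the integer slope $r$ on $K$ in terms of a slope $r'$ on $K' = T_{2,q}$, of the form $r = 4r' + (\text{correction from } \mathrm{lk}(K',A))$.

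With $K' = T_{2,q}$, the manifold $K'(r')$ is Seifert fibered for every slope $r'$, and its Seifert invariants are read off from the cable structure of the torus knot. For the branched double cover of $K'(r')$ over the image of $\tilde{A}$ to be the small Seifert fibered space $K(r)$, the image of $\tilde{A}$ must sit in a very restricted position relative to the fibration of $K'(r')$ (essentially as a regular fiber, so that the double cover over it respects the fibration and produces exactly three exceptional fibers rather than more). This geometric constraint pins down $r'$, and substituting back into the slope correspondence produces precisely the two values $r = 4q-1$ and $r = 4q+1$.

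The main obstacle will be the careful bookkeeping of the slope correspondence between $r$ on $K$ and $r'$ on $K'$: both the covering degree $2$ and the linking number $\mathrm{lk}(K', A)$ enter non-trivially, and one has to verify that the case analysis of where $\tilde A$ can live in the fibration of $K'(r')$ actually excludes all slopes other than $4q \pm 1$. A secondary difficulty is confirming that the involution can be isotoped to be fibration-preserving; this uses the hypothesis that $K$ is hyperbolic (so that $K(r)$ is irreducible and atoroidal apart from the structure coming from the fibration), allowing the equivariant theorems cited above to apply in a clean form.
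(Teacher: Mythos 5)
Your overall framework---extend the periodic map $f$ to a fiber-preserving involution $\bar f$ of $K(r)$ and compare the quotient with a surgery on the factor knot---matches the paper's (which cites Miyazaki--Motegi, Lemma 2.1, for the fiber-preserving step). However, the two steps that actually produce the values $4q\pm 1$ are respectively wrong and missing. First, the slope correspondence: since $N(K)\to N(K')$ is an unbranched double cover of solid tori, the meridian maps to the meridian and the longitude to twice the longitude (plus a meridional correction), so the surgery curve $r\mu+\lambda$ descends to a curve of slope $(r+c)/2$; with the preferred framings the quotient is $K(r)/\bar f\cong K'(r/2)$, not $K'((r-c)/4)$. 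Your formula $r=4r'+(\text{correction})$ is inconsistent with your own description of the covering ($\mu\mapsto\mu'$, $\lambda\mapsto 2\lambda'+c\mu'$ gives $r=2r'-c$) and would not yield $4q\pm 1$ from the correct input. The ``$4$'' in the answer comes from Moser's lens-space condition $|2q\cdot 2-r|=1$ applied to the half-integer slope $r/2$, not from the slope correspondence.

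Second, and more seriously, your mechanism for pinning down the slope---that the image of the axis must sit as a regular fiber of $K'(r')$ so that the branched double cover has only three exceptional fibers---rests on an incorrect picture. In this situation the induced map $\hat f$ on the base $S^2$ is a \emph{reflection} along an equator $C_f=\pi(\fix(\bar f))$ (Miyazaki--Motegi, Lemma 2.3(1)), so $\fix(\bar f)$ projects onto $C_f$ transversally to the fibers and is not a fiber; moreover the double branched cover of a Seifert fibered space over a regular fiber would double the number of exceptional fibers, not preserve it. The constraint that actually works is combinatorial: the three cone points either all lie on $C_f$, or one lies on $C_f$ while the other two are interchanged by the reflection, so the quotient $K(r)/\bar f\cong K'(r/2)$ is homeomorphic to $S^3$ or to a lens space. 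Since no nontrivial surgery on $T_{2,q}$ with $q\ge 3$ yields $S^3$, Moser's classification of lens space surgeries on torus knots forces $r/2=2q\pm 1/2$, and integrality of $r$ gives $r=4q\pm 1$. Without the lens-space observation and Moser's theorem your argument has no way to reach the specific values $4q\pm 1$.
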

\begin{proof}
We only show the outline of the proof. 
For details, see \cite[Section 2]{MiyazakiMotegi2002a}. 
Let $f$ be a periodic map inducing a cyclic period $2$ on $K$. 
Suppose that $K(r)$ is a Seifert fibered manifold with the base orbifold $S^2$ having just three exceptional fibers, 
and let $\pi: K(r) \rightarrow S^2 %(n_1,n_2,n_3)
$ be a Seifert fibration. 
Let $\bar{f}: K(r) \rightarrow K(r)$ be the natural extension of $f$, 
which preserves the Seifert fibration 
(actually, we can take $\bar{f}$ as fiber preserving~\cite[Lemma 2.1]{MiyazakiMotegi2002a}), 
and let $\hat{f} : S^2%(n_1,n_2,n_3) 
\rightarrow S^2 %(n_1,n_2,n_3)
$ be a homeomorphism satisfying $\hat{f}\circ \pi = \pi \circ \bar{f}$. 
Then $\hat{f}$ is a reflection along the equator 
$C_f =\fix(\hat{f}) = \pi(\fix(\bar{f})) \subset S^2 %(n_1,n_2,n_3)
$~\cite[Lemma 2.3 (1)]{MiyazakiMotegi2002a}. 
Here we note that the following diagram commutes: 
$$\xymatrix{
S^3 \ar[d]_{r\text{-surgery}} \ar[r]^{/f} \ar@{}[dr]|\circlearrowleft & S^3/f=S^3 \ar@{>}[dr]^{r/2\text{-surgery}}&~ \\
K(r) \ar[r]^{/\bar{f}} & K(r)/\bar{f} & \hspace{-35pt} \cong K'(r/2)
}$$
Since the base orbifold of $K(r)$ is $S^2$ and $K(r)$ has just three exceptional fibers, the configuration of cone points is either of the following: 
\begin{enumerate}
\item[(i)] Three cone points are lying on $C_f$. 
\item[(ii)] One is lying on $C_f$, another one is in the north hemisphere, and the other one is in the south hemisphere respectively. 
\end{enumerate}
Thus $K(r)/\bar{f}$ is homeomorphic to either $S^3$ if the configuration is of type (i) or a lens space if the configuration is of type (ii). 
If $K'$ is a torus knot $T_{2,q}$ with $q \ge 3$, then $K'(r/2)$ is not homeomorphic to $S^3$~\cite{Moser1971} and thus (i) is not appropriate. 
Therefore two cone points in each hemispheres have the same index and $K(r)/\bar{f} \cong K'(r/2)$ is equivalent to a lens space. 
Then by the classification of Dehn surgeries on torus knots due to Moser~\cite{Moser1971}, we have $r/2 = 2q \pm 1/2$. 
Since $r \in \mathbb{Z}$, we have $r=4q \pm 1$. 
\end{proof}

%%%%%%%%%%%%%%%%%%%%%%%%%

Now we start the proof of Proposition~\ref{prop:even}. 

\begin{proof}[Proof of Proposition~\ref{prop:even}]
Set $K=P(2n,q,q)$. 
By the same reason as in the proof of Proposition~\ref{prop:odd}, 
it suffices to show that $K(r)$ is not a Seifert fibered manifold with the base orbifold $S^2$ having just three exceptional fibers for any $r \in \mathbb{Q}$. 

To restrict Seifert fibered slopes, we apply Lemma~\ref{lem:period}. 
%use a symmetry instead of a boundary slope as in Claim~\ref{clm:slope-odd}. 

\begin{claim}
If $K(r)$ is a Seifert fibered manifold, then we have $r=4q \pm 1$. 
\end{claim}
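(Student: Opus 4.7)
The plan is to apply Lemma~\ref{lem:period} to a cyclic period of order $2$ on $K = P(2n,q,q)$ coming from the symmetry that exchanges the two $q$-tangles.

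First I would reduce to integer slopes: since the entries $2n$, $q$, $q$ are all positive, the standard pretzel diagram of $K$ is alternating, so by \cite[Theorem 1.1]{Ichihara2008a} every Seifert fibered slope on $K$ is an integer, and I may assume $r \in \mathbb{Z}$.

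Next I would set up the symmetry. Let $f$ be the $\pi$-rotation of $S^3$ that swaps the two $q$-tangles; its fixed circle $\alpha_2$ passes through the $2n$-tangle, but since $2n$ is even, $\alpha_2$ can be arranged strictly between the two strands of this twist region, so $\fix(f) \cap K = \emptyset$. Together with $\fix(f) \neq \emptyset$ and $f^2 = \id$, this gives the desired cyclic period of order $2$.

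The main step, and the main obstacle, is to identify the factor knot $K' = K/f$ with the torus knot $T_{2,q}$. Drawing the quotient diagram, I expect the two $q$-tangles to fold together onto a single $q$-twist on two strands, while the $2n$-tangle collapses to a trivial arc once the axis identifies its two strands; the resulting diagram should be the standard two-bridge presentation of $T_{2,q}$. Once this identification is confirmed---for example, by a diagrammatic isotopy together with the determinant check $\det(K') = q$---hyperbolicity of $K$ (Remark~\ref{rem:hyp-pretzel}) and Lemma~\ref{lem:period} with $q \ge 3$ immediately yield $r = 4q \pm 1$.
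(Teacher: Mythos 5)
Your proof follows the paper's argument exactly: the paper likewise exhibits the cyclic period $2$ of $P(2n,q,q)$ whose factor knot is $T_{2,q}$ (its Figure~\ref{fig:cyclicperi1} is precisely the $\pi$-rotation you describe, with the evenness of $2n$ ensuring the axis misses $K$) and then invokes Lemma~\ref{lem:period}. Your preliminary reduction to integral $r$ via the alternating diagram is the same step the paper carries out in the odd case, so nothing essential differs.
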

\begin{proof}
Suppose that $K(r)$ is a Seifert fibered manifold with the base orbifold $S^2$ having just three exceptional fibers. 
As shown in Figure~\ref{fig:cyclicperi1}, $K$ has a cyclic period $2$, 
and the factor knot with respect to this cyclic period is equivalent to $T_{2,q}$. 
Then by Lemma~\ref{lem:period}, we have 
$r/2 = 2q \pm 1/2$, that is, $r=4q \pm 1$. 
\begin{figure}[htb]
\includegraphics[width=.4\textwidth]{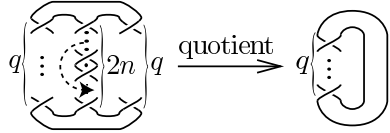}
\caption{The $\pi$-rotation (with axis perpendicular to the page) induces a cyclic period $2$.}\label{fig:cyclicperi1}
\end{figure}
\end{proof}

Next we use Proposition~\ref{cri:SFSquotient} to complete the proof of Proposition~\ref{prop:even}. 
We assume that $r=4q \pm 1$. 
Take an axis $\alpha_2$ which induces a strong inversion of $K$ as shown in Figure~\ref{fig:axis2} (left). 
Applying the Montesinos trick, we obtain the knot $K_e = K_e(n,q,r)$ which is represented by the diagram $D_e(n,q,r)$ as shown in Figure~\ref{fig:axis2} (center and right). 
Here we note that $K_e$ is a knot since $r=4q \pm1$ is an odd integer. 
Then by Proposition~\ref{cri:SFSquotient}, it suffices to show that $K_e$ is equivalent to neither a Montesinos knot nor a torus knot. 

\begin{figure}[htb]
\includegraphics[width=.9\textwidth]{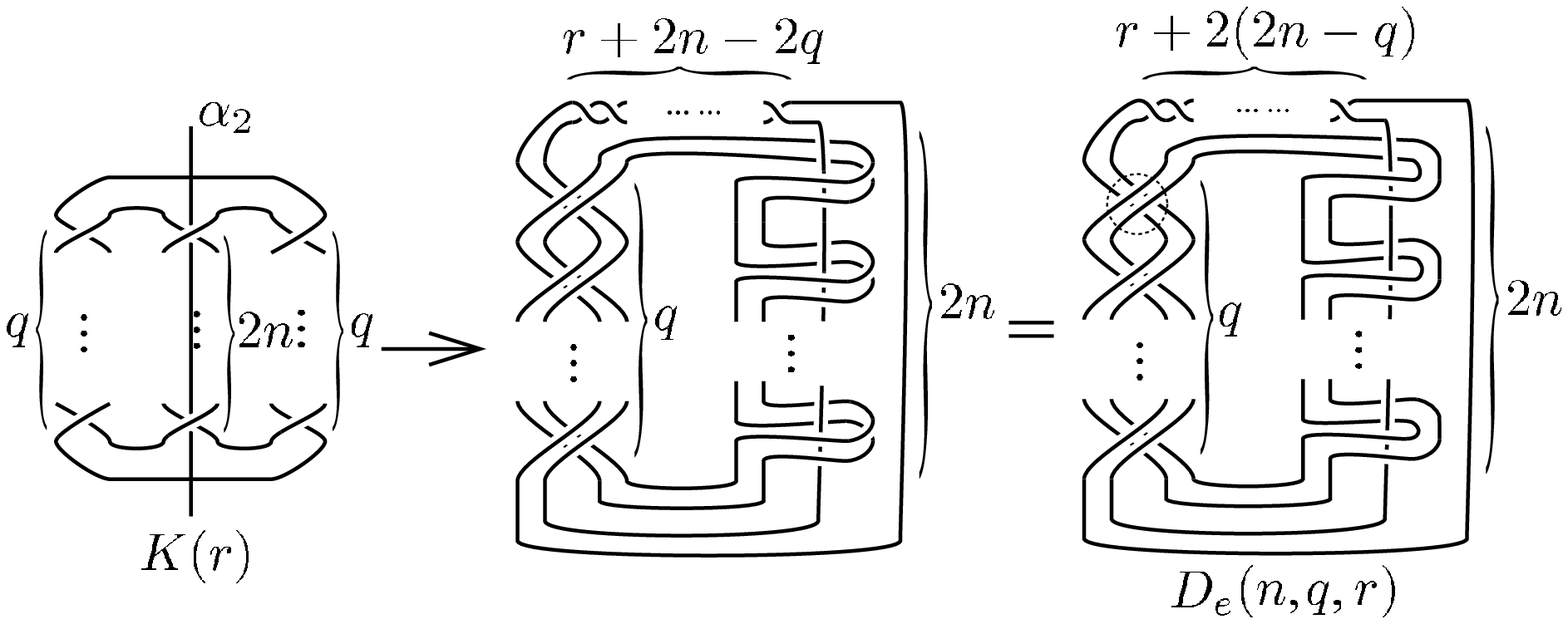}
\caption{}\label{fig:axis2}
\end{figure}

\begin{claim}\label{clm:Monte-even}
$K_e$ is not equivalent to a Montesinos knot. 
\end{claim}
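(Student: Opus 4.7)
The plan is to mirror the strategy used in Claim~\ref{clm:Monte-odd}: apply Criterion~\ref{cri:Monte} by showing that $s(K_e) + \sigma(K_e) \ge 4$. To do so, I would first verify that the diagram $D_e(n,q,r)$ shown in Figure~\ref{fig:axis2} is a positive diagram of $K_e$, then isolate a small neighborhood in which a single \#-move produces a new positive knot $K_e'$, and finally combine Lemmas~\ref{lem:sharp-rasmussen} and~\ref{lem:sharp-sign} with Remark~\ref{rem:positiveS-sigma}.

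For positivity, note that $r = 4q\pm 1 \ge 4\cdot 3 - 1 = 11 > 0$, so the large twist region carrying $r$ crossings after the Montesinos trick contributes only positive crossings; the remaining twist regions (coming from the $2n$ and $q$ parameters) should likewise be read off as positive from Figure~\ref{fig:axis2} (right), just as in the odd case. Next I would locate a pair of adjacent crossings in $D_e(n,q,r)$ --- the natural candidate sits inside one of the $q$-twist regions, in analogy with the broken circle of Figure~\ref{fig:axis1} --- such that the local replacement of Figure~\ref{fig:sharp_2} yields a positive diagram $D_e'$ of a knot $K_e'$, and such that the smoothing of that region (as in Figure~\ref{fig:sharp0}) is a $2$-component link diagram so that Lemma~\ref{lem:sharp-sign} applies in its tighter form.

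Once the local setup is in place, Lemma~\ref{lem:sharp-rasmussen} gives $s(K_e) - s(K_e') = 8$, Lemma~\ref{lem:sharp-sign} gives $\sigma(K_e') - \sigma(K_e) \le 4$, and Remark~\ref{rem:positiveS-sigma} gives $s(K_e') + \sigma(K_e') \ge 0$ because $K_e'$ is positive. Adding these inequalities yields
\[
s(K_e) + \sigma(K_e) \;\ge\; \bigl(s(K_e') + 8\bigr) + \bigl(\sigma(K_e') - 4\bigr) \;=\; s(K_e') + \sigma(K_e') + 4 \;\ge\; 4,
\]
so Criterion~\ref{cri:Monte} forces $K_e$ to be non-Montesinos.

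The main obstacle is the combinatorial bookkeeping around the axis $\alpha_2$: one must pinpoint a pair of crossings in $D_e(n,q,r)$ whose \#-move (a) preserves positivity of the diagram, (b) leaves the result a knot rather than a link, and (c) has a $2$-component smoothing so that the sharper bound $\sigma(K_e') - \sigma(K_e)\le 4$ of Lemma~\ref{lem:sharp-sign} is available (the knot-smoothing case would only give $\le 6$, which is not enough). Since the involution used here differs from the one used for $K_o$ and the parity of $r$ is forced rather than free, verifying (a)--(c) directly from Figure~\ref{fig:axis2} is where the real work lies; the rest of the proof is then formally identical to Claim~\ref{clm:Monte-odd}.
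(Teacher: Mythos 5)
Your proposal follows essentially the same route as the paper: verify that $D_e(n,q,r)$ is positive, apply a \#-move inside the broken circle of Figure~\ref{fig:axis2} to get a positive knot $K_e'$ with a $2$-component smoothing, and combine Lemma~\ref{lem:sharp-rasmussen}, Lemma~\ref{lem:sharp-sign}, and Remark~\ref{rem:positiveS-sigma} to get $s(K_e)+\sigma(K_e)\ge 4$, exactly as in the paper. The only detail worth noting is that the positivity check is on the combined twist region, i.e.\ the paper verifies $r+2(2n-q)\ge 4q-1+2(2n-q)\ge 9>0$ rather than just $r>0$, but this is the bookkeeping you already flagged and it goes through.
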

\begin{proof}
The diagram $D_e(n,q,r)$ is a positive diagram since 
\begin{align*}
r + 2(2n-q) &\ge 4q -1 + 2(2n-q) \\ 
&\ge 12 -1 + 2(2-3) \\ 
&= 9 > 0. 
\end{align*}
Thus, $K_e$ is a positive knot. 
Let $K_e'$ be the knot obtained from $K_e$ by applying a \#-move in the broken circle depicted in Figure~\ref{fig:axis2}.  
Note that $K_e'$ is also positive since $q \geq 3$. 
Let $D_0$ be the diagram obtained from $D_e(p,q,r)$ 
by changing the tangle diagram in the broken circle depicted in Figure~\ref{fig:axis2} to that depicted in Figure~\ref{fig:sharp0}. 
Notice that $D_0$ is a 2-component link diagram. 
Then, by Lemma~\ref{lem:sharp-sign}, we have $$\sigma(K_e') - \sigma(K_e) \leq 4.$$ 
On the other hand, by Lemma~\ref{lem:sharp-rasmussen}, we have $$s(K_e) - s(K_e') = 8.$$ 
By Remark~\ref{rem:positiveS-sigma}, we have $$s(K_e') + \sigma(K_e') \ge 0. $$
Thus, we have
\begin{align*}
s(K_e) + \sigma(K_e) & \ge s(K_e') + 8 + \left(\sigma(K_e') - 4 \right) \\
& = s(K_e') + \sigma(K_e') + 4 \\
& \geq 4.
\end{align*}
Therefore $K_e$ is not equivalent to a Montesinos knot by Criterion~\ref{cri:Monte}. 
\end{proof}

\begin{claim}\label{clm:4torus-even}
$K_e$ is not equivalent to a torus knot. 
\end{claim}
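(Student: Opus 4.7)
The plan is to mirror the argument of Claim~\ref{clm:4torus-odd}, adapted to the even case with slope $r = 4q \pm 1$.

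First, from Figure~\ref{fig:axis2}, I would read off a presentation of $K_e$ as the closure of a $4$-strand positive braid $\beta$. Using the braid relations $\sigma_i\sigma_{i+1}\sigma_i = \sigma_{i+1}\sigma_i\sigma_{i+1}$ and $\sigma_i\sigma_j = \sigma_j\sigma_i$ for $|i-j| \ge 2$ exactly as in Claim~\ref{clm:4torus-odd}, I would exhibit $\beta$ in the form $\Delta_4^2 \cdot \beta'$ with $\beta' \in B_4^+$; the hypotheses $n \ge 1$, $q \ge 3$, and $r = 4q \pm 1 \ge 11$ are what guarantee that all exponents remain nonnegative after the rewriting. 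Applying \cite[Corollary 2.4]{FranksWilliams1987} then forces the braid index of $K_e$ to be exactly $4$. Therefore, if $K_e$ were equivalent to a torus knot, it must be $T_{4,x}$ for some positive integer $x$, positivity of $x$ coming from positivity of $K_e$.

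Next, since $K(r)$ is the double branched cover of $S^3$ along $K_e$ and $r \in \mathbb{Z}$, we get
\[
\det(K_e) = |H_1(K(r))| = |r| = 4q \pm 1.
\]
Combined with Lemma~\ref{lem:detT4}, this pins down $x = 4q \pm 1$.

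Finally, to obtain a contradiction, I would compare genera. Since $K_e$ is positive with the positive diagram $D_e(n,q,r)$, one has $g(K_e) = (c(D_e(n,q,r)) - O(D_e(n,q,r)) + 1)/2$, while $g(T_{4,4q\pm 1}) = 3(4q \pm 1 - 1)/2$. Counting crossings directly in the braid word $\beta$, which receives contributions from both the $n$-part and from $r = 4q \pm 1$, and noting that $O(D_e(n,q,r)) = 4$, the resulting lower bound on $g(K_e)$ should strictly exceed $g(T_{4,4q\pm 1})$ thanks to the extra crossings proportional to $n$ which are absent from the torus knot.

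The main obstacle is the last step: because $r = 4q \pm 1$ is already large, both $g(K_e)$ and $g(T_{4,4q\pm 1})$ grow linearly in $q$, so the strict inequality must come entirely from the $n$-dependent contribution to the braid word. Extracting the exact braid word from Figure~\ref{fig:axis2} and verifying this genus inequality uniformly for all admissible $n \ge 1$ and odd $q \ge 3$ is the genuine technical heart of the claim; once that arithmetic check is in place, Steps~1 and~2 reduce the problem to the unique candidate $T_{4,4q \pm 1}$, and Step~3 rules it out.
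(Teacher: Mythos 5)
Your Steps 1 and 2 coincide with the paper's argument: the braid from Figure~\ref{fig:axis2} is rewritten as $\Delta_4^2$ times a positive $4$-braid, \cite[Corollary 2.4]{FranksWilliams1987} pins the braid index at $4$, and $\det(K_e)=|H_1(K(r))|=r$ together with Lemma~\ref{lem:detT4} forces $x=4q\pm 1$. The gap is in your Step 3. You propose to conclude by showing $g(K_e)$ \emph{strictly exceeds} $g(T_{4,4q\pm 1})$, attributing the excess to the $n$-dependent crossings. This inequality is false in general. The Seifert-algorithm computation gives $g(K_e(n,q,4q+1))=6n+3q-1$ and $g(K_e(n,q,4q-1))=6n+3q-2$, while $g(T_{4,4q+1})=6q$ and $g(T_{4,4q-1})=6q-3$. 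For $n=1$, $q=3$, $r=13$ one gets $g(K_e)=14<18=g(T_{4,13})$, so the inequality you want goes the wrong way; for large $n$ it goes the other way. Neither direction holds uniformly over the admissible range $n\ge 1$, odd $q\ge 3$.

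What actually closes the argument is a divisibility obstruction, not an inequality: equating the two genera yields $6n-3q=1$ (for $r=4q+1$) or $6n-3q=-1$ (for $r=4q-1$), and since $6n-3q=3(2n-q)$ is a multiple of $3$ it can never equal $\pm 1$. This is the step the paper uses, and it is the one piece of your outline that needs to be replaced; everything before it is sound and matches the paper's route.
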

\begin{proof}
As shown in Figure~\ref{fig:axis2}, $K_e$ is the closure of a four-strand positive braid 
$$(\sigma_2\sigma_3\sigma_1\sigma_2)^{q} (\sigma_2\sigma_3^2\sigma_2)^p \sigma_1^{2(2n-q)+r} = \Delta_4^2 (\sigma_2\sigma_3\sigma_1\sigma_2)^{q-2} (\sigma_2\sigma_3^2\sigma_2)^p \sigma_1^{2(2n-q)+r-4}.$$
Then the braid index of $K_e$ equals four~\cite[Corollary 2.4]{FranksWilliams1987}. 
Thus, it suffices to show that $K_e$ is not equivalent to a torus knot $T_{4,x}$. 
Suppose for a contradiction that $K_e$ is equivalent to a torus knot $T_{4,x}$. 
We may assume that $x$ is positive since $K_e(p,q,r)$ is a positive knot. 
Since $K(r)$ is the double branched covering space of $S^3$ branched along $K_e$, we have
$$\det(K_e) = |H_1(K(r))|. $$
Since $r=4q \pm 1$ is a positive integer, we have 
$$|H_1(K(r))| = r =4q \pm1.$$ 
On the other hand, by Lemma~\ref{lem:detT4}, we have 
$$\det(T_{4,x})=x. $$ 
Thus, we have $x=r = 4q \pm 1$. 
Now we calculate the genera of $K_e(n,q,4q \pm 1)$ and of $T_{4,4q \pm 1}$ in the same way as in the proof of Claim~\ref{clm:4torus-odd}. 
We have 
\begin{align*}
g(K_e(n,q,4q \pm 1)) &= (4q + 8n + 4n -2q +4q \pm 1 -4 + 1)/2\\
&=
\begin{cases}
6n+3q-1 & \textrm{ if \ } r=4q+1,\\
6n+3q-2 & \textrm{ if \ } r=4q-1,
\end{cases}
\end{align*}
and
\begin{align*}
g(T_{4,r}) = 3(r-1)/2
&=\begin{cases}
6q &  \textrm{ if \ } r=4q+1,\\
6q -3 &  \textrm{ if \ } r=4q-1.
\end{cases} 
\end{align*}
Therefore we have 
\begin{align*}
\begin{cases}
6n+3q-1 = 6q & \textrm{ if \ } r=4q+1,\\
6n+3q-2 = 6q-3 & \textrm{ if \ } r=4q-1.
\end{cases}
\end{align*}
That is, we have 
\begin{align*}
\begin{cases}
6n-3q = 1 & \textrm{ if \ } r=4q+1,\\
6n-3q = -1 & \textrm{ if \ } r=4q-1.
\end{cases}
\end{align*}
These contradict that $n$ and $q$ are integers, and we complete the proof of Claim~\ref{clm:4torus-even}. 
\end{proof}
Now we complete the proof of Proposition~\ref{prop:even}. 
\end{proof}

\subsection*{Acknowledgments}
The authors would like to thank Professor Taizo Kanenobu 
for useful comments on Proposition~\ref{cri:SFSquotient}. 
They also  wish to thank Professor Akio Kawauchi for helpful conversations. 
They also would like to thank Tetsuya Ito for letting them know the result obtained by Franks and Williams in~\cite{FranksWilliams1987}. 
They also would like to thank the referee for careful reading and useful suggestions.

\providecommand{\bysame}{\leavevmode\hbox to3em{\hrulefill}\thinspace}
\providecommand{\MR}{\relax\ifhmode\unskip\space\fi MR }
% \MRhref is called by the amsart/book/proc definition of \MR.
\providecommand{\MRhref}[2]{%
  \href{http://www.ams.org/mathscinet-getitem?mr=#1}{#2}
}
\providecommand{\href}[2]{#2}

%\bibliographystyle{amsplain}
%\bibliography{mylist}

\end{document}